\documentclass[a4paper]{amsart}
%
%%%   fol {lemma} {remark}  {theorem}  {definition} + !!! {propos} în ART_2012 era proposition !!!
%%%   sch \em Remark cu \bf Remark = 3 recurrences
%%%   sch \em Example cu \bf Example = 8 recurrences
%%%  
%%%   scos %\qed caci am patratelul automat !!!
%
%%%%%%%%%%%%%%%%%%  Auxiliary packages
%\usepackage{makeidx,showidx}
%\usepackage{latexsym}
\usepackage{amsmath}
\usepackage{amsfonts}
\usepackage{amssymb}
\usepackage{amsthm}
\usepackage{calc}
\usepackage{times}
\usepackage{epsf}
\usepackage{epsfig}
\usepackage{footnpag}
\usepackage{wrapfig}
\usepackage{floatflt}
\usepackage{graphicx}

\setlength{\textwidth}        {11.5 truecm}
\setlength{\textheight}       {19 truecm}

\title{A class of non--associated materials: $n$--monotone materials}  
%%%{A class of non--associated materials: $n$--monotone materials -- Hooke's law of elasticity revisited} 
%
%%%pus din ART_2012 dar NU MERGE !!! 54 erori în loc de 3 erori !!!
%\titlerunning{A class of non--associated materials: $n$--monotone materials}  
%
\author{C. Vall\'{e}e, C. Lerintiu, J. Chaoufi, D. Fortun\'{e}, M. Ban, K. Atchonouglo}

\theoremstyle{definition}         %Bold header and normal body
%
%%% sch la mijloc definition cu Definition !!! altfel în PDF e scris cu litera mica la început
\newtheorem{definition}{Definition}[section]
%%% nu trebuie   \newtheorem{definition}[definition]{Definition}

\newtheorem{remark}[definition]{Remark}
\newtheorem{example}[definition]{Example}

\theoremstyle{plain}              %Bold header and italic body
\newtheorem{theorem}[definition]{Theorem}
\newtheorem{propos}[definition]{Proposition}
\newtheorem{lemma}[definition]{Lemma}

\numberwithin{equation}{section}
%First and second author's address
\address{Universit\'{e} de Poitiers \\ Institut Pprime\\ SP2MI\\ UPR CNRS 3346\\ Bd Marie et Pierre Curie\\ T\'{e}l\'{e}port 2\\ BP 30179\\ 86962 Futuroscope Chasseneuil Cedex\\ France} 
              \address{Tel.: +33-683746231}
%             
              %\email{claude.vallee@univ-poitiers.fr} 
              \address{E-mail adress: claude.vallee@univ-poitiers.fr} 
%%%{Universit\'{e} de Poitiers\\Laboratoire de M\'{e}canique des Solides\\UMR 6610 \\SP2MI\\ T\'{e}l\'{e}port 2\\
%%%Boulevard Pierre et Marie Curie\\ 
%%%\\ B.P. 30179\\ F-86962 Futuroscope-Chasseneuil Cedex\\ France}
%%%\address{E-mail adress: vallee@lms.univ-poitiers.fr}
%
\address{Universit\'{e} de Poitiers \\ Institut Pprime\\ SP2MI\\ UPR CNRS 3346\\ Bd Marie et Pierre Curie\\ T\'{e}l\'{e}port 2\\ BP 30179\\ 86962 Futuroscope Chasseneuil Cedex\\ France} 
\address{Universit\'{e} Ibn Zohr\\ Facult\'{e} des Sciences\\ cit\'{e} Dakhla \\B.P. 8106\\ 80000 Agadir\\ Morocco} 
\address{21\\ rue du Hameau du Cherpe\\ 86280 Saint Beno\^{i}t\\ France} 
\address{Institut f\"ur Allgemeine Mechanik\\ RWTH Aachen\\ Templergraben 64\\ 52056 Aachen\\ Germany} 
\address{Universit\'{e} de Lom\'{e}, Facult\'{e} des Sciences, B.P. 1515, Lom\'{e}, Togo}
%
%%%inversat subclass cu keywords
%
\keywords{non--associated constitutive laws; elastic materials;  $n$--cyclically monotone operators;
Fitzpatrick's sequences; bipotentials} 
\subjclass[2000]{Primary: 74D10 \and 47H05 \and Secondary: 47H04} 

%%
%%____________START_____________________
%%
\begin{document}
\begin{abstract}
%%%\noindent
Generalized Standard Materials are governed by maximal cyclically monotone operators 
and modeled by convex potentials. G\'{e}ry de Saxc\'{e}'s Implicit Standard Materials are modeled 
by biconvex bipotentials. We analyze the intermediate class of $n$--monotone materials governed by 
maximal $n$--monotone operators and modeled by Fitzpatrick's functions. Revisiting the model of elastic 
materials initiated by Robert Hooke, and insisting on the linearity, coaxiality and monotonicity properties 
of the constitutive law, we illustrate that Fitzpatrick's representation of $n$--monotone operators coming 
from convex analysis provides a constructive method to discover the best bipotential modeling a 
$n$--monotone material. Giving up the symmetry of the linear constitutive laws, we find out that 
$n$--monotonicity is a relevant criterion for the materials characterization and classification.
%
%
%\keywords{non--associated constitutive laws; elastic materials;  $n$--cyclically monotone operators;
%Fitzpatrick's sequences; bipotentials} 
%
%\subjclass[2000]{Primary: 74D10 \and 47H05 \and Secondary: 47H04}
% 
\end{abstract}

\maketitle
%
%
%%%%%%%%%%%%%%%%%%%%%%%%%%%%%%%%%%%%%%%%%%%%%%%%%%%%%%%%%%%%%%
%%%%%%%%%%%%%%%%%%%%%%%%%%%%%%%%%%%%%%%%%%%%%%%%%%%%%%%%%%%%%%  1 Introduction
%%%%%%%%%%%%%%%%%%%%%%%%%%%%%%%%%%%%%%%%%%%%%%%%%%%%%%%%%%%%%%
%
%
\section{Introduction}
\label{sec:1}

Standard Materials are modeled by differentiable potentials. Mainly to capture 
set--valued constitutive laws, for example plastic flow rules (\cite{deb76,mor76})
they were extended to the so--called "Generalized Standard 
Materials" (GSM) modeled by lower semi--continuous (lsc) convex potentials (\cite{hac97,hal75}). 
However, this extension failed to describe Coulomb's dry friction law. 

In 1991, considering such an implicit constitutive law,  G\'{e}ry de Saxc\'{e} and 
Zhi--Qiang Feng (\cite{sax91,sax92,sax98}) proposed a new generalization 
which they called "Implicit Standard Material" (ISM). This new class of materials is 
modeled by a point--to--point function which they called a bipotential. In the particular 
case of a GSM, the bipotential becomes simply the sum of the potential and its conjugate.
 
For a given GSM, a theorem due to R. Tyrrel Rockafellar (\cite{roc170,roc66}) and 
Jean--Jacques Moreau (\cite{mor03}) provides a constructive method to retrieve the potential 
from its subdifferential. A similar, crucial question for a given ISM is: how to 
retrieve the bipotential from the implicit constitutive law?

Independently, in 1988, Simon Fitzpatrick (\cite{fit88}), in order to simplify 
the study of monotone operators (\cite{bau307,mar05}), made a proposal to replace these 
multifunctions by point--to--point functions, nowadays called Fitzpatrick's functions.
 
Is there a relation between G\'{e}ry de Saxc\'{e}'s bipotentials representing 
ISM constitutive laws and Fitzpatrick's functions representing maximal 
monotone multifunctions? Can this last representation coming from convex 
analysis provide a method to construct the best bipotential 
modeling a given ISM? The aim of our paper is to discover the largest class 
of ISM for which the answers to both questions are positive.
 
Revisiting the modeling of elastic materials initiated by Robert Hooke, we 
illustrate Fitzpatrick's method on the example of linear coaxial constitutive laws. 

%
%
%
%%%%%%%%%%%%%%%%%%%%%%%%%%%%%%%%%%%%%%%%%%%%%%%%%%%%%%%%%%%%%%
%%%%%%%%%%%%%%%%%%%%%%%%%%%%%%%%%%%%%%%%%%%%%%%%%%%%%%%%%%%%%%  2  Standard Materials
%%%%%%%%%%%%%%%%%%%%%%%%%%%%%%%%%%%%%%%%%%%%%%%%%%%%%%%%%%%%%%
%
%
\section{Standard Materials}
\label{sec:2}

A constitutive law relating a strain--like variable $x$ belonging to a real
Banach space $X$ (with norm $\|\cdot\|$) and a stress--like variable $y$ 
belonging to the topological dual space $Y=X^*$ (with the duality product $\langle\cdot,\cdot\rangle$),
is a subset of the product set $X\times Y$. This subset can be regarded as the graph 
\begin{equation}
G(T)=\left\{(x,y)\in X\times Y \mid y\in Tx\right\} \nonumber
\end{equation}
of a multivalued operator $T:X\longrightarrow 2^Y$. In finite dimension, when this subset is a maximal lagrangian 
submanifold of the linear space $X\times Y$ (made symplectic by the canonical Darboux 2--form), there 
exists a differentiable function $\phi$, called "potential", such that the constitutive law reads
\begin{equation}
y=\mathrm{grad}\, \phi(x)  \mbox{. }
\nonumber
\end{equation}

If, additionally, this potential is convex, the inverse constitutive law reads
\begin{equation}
x=\mathrm{grad}\, \phi^*(y) \nonumber
\end{equation}
with $\phi^*$ the Legendre transform of the potential $\phi$ (called 
"conjugate potential"). A material whose behavior can be described by a differentiable 
potential is referred to as a "Standard Material" (SM).

%
%
%
%%%%%%%%%%%%%%%%%%%%%%%%%%%%%%%%%%%%%%%%%%%%%%%%%%%%%%%%%%%%%%
%%%%%%%%%%%%%%%%%%%%%%%%%%%%%%%%%%%%%%%%%%%%%%%%%%%%%%%%%%%%%%  3 Generalized Standard Materials
%%%%%%%%%%%%%%%%%%%%%%%%%%%%%%%%%%%%%%%%%%%%%%%%%%%%%%%%%%%%%%
%
%
\section{Generalized Standard Materials}
\label{sec:3}

For many materials, the relation between $x$ and $y$ is multivalued. Giving up 
the differentiability of the potential $\phi$, but retaining its convexity and its 
lower semi--continuity, a large class of materials, called "Generalized Standard 
Materials" (GSM), can be described by one of the following three equivalent constitutive laws:
\begin{enumerate}
\item[$(i)$] $y\in\partial\phi (x)$
\bigskip

%\;\;\;\;\;\; 
\item[$(ii)$] $x\in\partial\phi^*(y)$
\bigskip

%\;\;\;\;\;\; 
\item[$(iii)$] $\phi(x)+\phi^*(y)=\langle x,y\rangle \mbox{.}$
\end{enumerate}

\begin{remark}
%[Remark 3.1]
The convexity of the potential $\phi$ allows to express the conjugate potential
$\phi^*$ as a supremum (\cite{mor03})
\begin{equation}
\phi^*(y)=\sup_{x\in X}\left[\langle x,y\rangle -\phi(x)\right]   \mbox{. }
\nonumber
\end{equation}
\end{remark}

\begin{remark}
%[Remark 3.2]
The subdifferentials 
\begin{equation}
\partial\phi(x) = \left\{y\in Y \mid \forall\xi\in X, \phi(\xi)\geq\phi(x)+\langle \xi-x,y\rangle\right\} 
\nonumber
\end{equation}
and
\begin{equation}
\partial\phi^*(y) = \left\{x\in X \mid \forall\eta\in Y, \phi^*(\eta)\geq\phi^*(y)+\langle x,\eta-y\rangle\right\}
\nonumber
\end{equation}
generalize (\cite{mor03}) the gradients of the potentials $\phi$ and $\phi^*$ when 
these convex potentials are not differentiable. Their elements are called subgradients. 
At a point $x$ (respectively $y$) where $\phi$ (respectively $\phi^*$) is both convex 
and differentiable, the set of subgradients $\partial\phi(x)$ (respectively $\partial\phi^*(y)$) 
reduces to the unique gradient at $x$ (respectively at $y$). 
\end{remark}

%
%
%
%%%%%%%%%%%%%%%%%%%%%%%%%%%%%%%%%%%%%%%%%%%%%%%%%%%%%%%%%%%%%%
%%%%%%%%%%%%%%%%%%%%%%%%%%%%%%%%%%%%%%%%%%%%%%%%%%%%%%%%%%%%%%  4 Implicit Standard Materials
%%%%%%%%%%%%%%%%%%%%%%%%%%%%%%%%%%%%%%%%%%%%%%%%%%%%%%%%%%%%%%
%
%
\section{Implicit Standard Materials}
\label{sec:4}

%
%4.1
%
\subsection{Bipotentials}

The equality $(iii)$ of Section \ref{sec:3} can be regarded as an extremal case of Fenchel's inequality
\begin{equation}
\phi(x)+\phi^*(y)\geq\langle x,y\rangle   \mbox{. }
\nonumber
\end{equation}

For modeling the dry friction phenomenon or the behavior of materials such as clays, 
G\'{e}ry de Saxc\'{e} noticed that it was fruitful to weaken this inequality to 
\begin{equation}
b(x,y)\geq \langle x,y\rangle  \mbox{. }
\nonumber
\end{equation}
Foregoing to separate the function $b(x,y)$ into a sum $\phi(x)+\phi^*(y)$ of two potentials, 
he called it a bipotential. The bipotentials b(x,y) are assumed to be
%where the function $b(x,y)$, called bipotential, is not necessarily separable 
%as a sum $\phi(x)+\phi^*(y)$ of two potentials. 
%
\begin{enumerate}
\item[(i)] convex and lsc in $x$
\bigskip

\item[(ii)] convex and lsc in $y$
\bigskip

\item[(iii)] bounded from below by the duality product: $b(x,y)\geq\langle x,y\rangle$
\end{enumerate}
as it is the case for separable bipotentials. 

%
%4.2
%
\subsection{Implicit Standard Materials}

A material whose behavior can be described (\cite{bul08,bul110,bul210}) by one of the following three 
implicit constitutive laws
\begin{enumerate}
\item[(iv)] $y$ belongs to the subdifferential of $b(\xi,y)$ with respect to $\xi$ at $x$
\bigskip

\item[(v)] $x$ belongs to the subdifferential of $b(x,\eta)$ with respect to $\eta$ at $y$
\bigskip

\item[(vi)] $b(x,y)=\langle x,y\rangle$
\end{enumerate}
is referred to as an "Implicit Standard Material" (ISM).

%
%4.3
%
\subsection{Examples of Implicit Standard Materials}
\label{subsec: 4.3}

The Implicit Standard Material model proved to be relevant for describing 
many non--associated phenomena (\cite{sax102} and references contained therein): 
unilateral contact with Coulomb type dry friction (\cite{hji02,hji04,sax91,sax98}),
generalized Drucker--Prager plasticity (\cite{sax102}),
modified Cam--Clay model (\cite{hji08,sax102,zou07,zou10}),
non-associated plasticity of soils (\cite{ber94,ber12}),
non--linear kinematical hardening rule for cyclic plasticity of metals (\cite{arm66,lem90}), 
Lema\^{i}tre's plastic--ductile damage law (\cite{lem87}),
and shakedown analysis of non--standard elastoplastic materials (\cite{bou109,bou209,bou06,sax202}). 

\bigskip
In the next sections (\ref{sec:5}, \ref{sec:6}, \ref{sec:7}, \ref{sec:8}) we recall some 
notions and results on monotone multifunctions (also called multivalued operators or set--valued maps) 
that are well suited to model the behavior of non--associated materials. 

%
%
%
%%%%%%%%%%%%%%%%%%%%%%%%%%%%%%%%%%%%%%%%%%%%%%%%%%%%%%%%%%%%%%
%%%%%%%%%%%%%%%%%%%%%%%%%%%%%%%%%%%%%%%%%%%%%%%%%%%%%%%%%%%%%%  5 Basic facts on monotone constitutive laws
%%%%%%%%%%%%%%%%%%%%%%%%%%%%%%%%%%%%%%%%%%%%%%%%%%%%%%%%%%%%%%
%
%
\section{Basic facts on monotone constitutive laws}
\label{sec:5}

%
%5.1
%
\subsection{Monotonicity} 
%
%%%Definition 5.1
%
\begin{definition} 
A constitutive law associated to a multifunction $T$ is monotone 
(\cite{bor00,bor05,mor03,roc270,zal02}) if 
\begin{equation}
[y_1\in Tx_1 \mbox{ and } y_2\in Tx_2 ] \Longrightarrow [\langle x_2-x_1,y_2-y_1\rangle \geq 0 ]  \mbox{. }
\nonumber
\end{equation}
%
%
%We call a multivalued operator $T$ strictly monotone 
It is strictly monotone (\cite{zal02}) if, additionally,
\begin{equation}
[y_1\in Tx_1 \mbox{, } y_2\in Tx_2 \mbox{ and } x_2 \not= x_1 ] \Longrightarrow [\langle x_2-x_1,y_2-y_1\rangle > 0 ]  \mbox{. }
\nonumber
\end{equation}
\end{definition}
%
%%%Example 5.2
%
\begin{example}
\label{Example 5.2} 
If $A$ is a positive linear mapping from $X$ to $Y$, i.e. 
\begin{equation}
\forall x\in X \mbox{, } \langle x,Ax \rangle \geq 0 \mbox{, } \nonumber
\end{equation}
then the single--valued multifunction $T$ defined by $Tx = \left\{Ax\right\}$ is monotone. 
\end{example}
%
%%%Example 5.3
%
\begin{example}
If $T$ is the subdifferential of a convex lsc potential $\phi$, then $T=\partial\phi$ 
is monotone (\cite{bau11,mor03}). However, a monotone multifunction is not necessarily 
the subdifferential of a lsc convex potential. 
\end{example}

%
%5.2
%
\subsection{Maximality}

%
%Definition 5.4
%
\begin{definition}
A monotone multifunction $T$ is maximal if there is no monotone proper enlargement of $T$ 
(\cite{bau11,bor00,bor05,mor03,zal02}).
\end{definition}
%
%Remark 5.5
%
\begin{remark}
Maximal monotone multifunctions are monotone multifunctions with graphs that cannot 
be enlarged without destroying monotonicity. To establish that a monotone multifunction 
is maximal, one must prove that 
\begin{equation}
(x,y)\not\in G(T) \Longrightarrow [\exists (x_1,y_1)\in G(T) \mbox{, } \langle x-x_1,y-y_1\rangle < 0 ]  \mbox{. }
\nonumber
\end{equation}
\end{remark}
%
%%%Remark 5.6  (Statements 5.6)
%
\begin{remark} 
\label{Remark 5.6}
The maximality assumption is equivalent (\cite{phe98,sim98}) to the statements
\begin{equation}
[(x,y)\in X\times Y \mbox{ and } \forall (x_1,y_1)\in G(T) \mbox{, } \langle x-x_1,y-y_1\rangle \geq 0 ] \Longrightarrow y\in Tx \mbox{, }
\nonumber
\end{equation}
\begin{equation}
[(x,y)\in X\times Y \mbox{ and } \inf_{y_1\in Tx_1}\langle x-x_1,y-y_1\rangle \geq 0 ] \Longrightarrow (x,y)\in G(T) \mbox{. }
\nonumber
\end{equation}
\end{remark}
%
%%%Lemma 5.7 
%
\begin{lemma}
\label{basiclemma}
If $T$ is a maximal monotone multifunction, then
\begin{enumerate}
\item[$(i)$] $(x,y)\in G(T) \Longrightarrow \inf_{y_1\in Tx_1}\langle x-x_1,y-y_1\rangle=0$
\bigskip

\item[$(ii)$] $(x,y)\not\in G(T) \Longrightarrow \inf_{y_1\in Tx_1}\langle x-x_1,y-y_1\rangle <0$ 
\bigskip

\item[$(iii)$] $\forall x\in X \mbox{, } \forall y\in Y \mbox{, } \inf_{y_1\in Tx_1}\langle x-x_1,y-y_1\rangle\leq 0$ \mbox{. }
\end{enumerate}
\end{lemma}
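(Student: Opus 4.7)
The plan is to read the infimum $\inf_{y_1\in Tx_1}\langle x-x_1,y-y_1\rangle$ as an infimum ranging over all pairs $(x_1,y_1)\in G(T)$, and then to chase the definitions of monotonicity and maximality provided in Section \ref{sec:5}, in particular the reformulation given in Remark \ref{Remark 5.6}.

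For part $(i)$, I would start from the assumption $(x,y)\in G(T)$. Monotonicity of $T$ directly gives $\langle x-x_1,y-y_1\rangle\ge 0$ for every $(x_1,y_1)\in G(T)$, so the infimum is bounded below by $0$. To see that this lower bound is attained, I would take the particular choice $(x_1,y_1)=(x,y)$, which is admissible since $y\in Tx$, and which yields $\langle x-x,y-y\rangle = 0$. Combining these two facts gives the required equality.

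For part $(ii)$, I would argue by the contrapositive of the characterization of maximality stated in Remark \ref{Remark 5.6}. If the infimum were $\ge 0$, then for every $(x_1,y_1)\in G(T)$ we would have $\langle x-x_1,y-y_1\rangle\ge 0$; by maximality this forces $y\in Tx$, i.e.\ $(x,y)\in G(T)$, contradicting the hypothesis. Hence the infimum must be strictly negative. (This step is really the heart of the lemma: it is here that maximality, as opposed to mere monotonicity, is used.)

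For part $(iii)$, I would simply combine $(i)$ and $(ii)$: at any pair $(x,y)\in X\times Y$, either $(x,y)\in G(T)$ and the infimum equals $0$ by $(i)$, or $(x,y)\notin G(T)$ and the infimum is strictly negative by $(ii)$; in both cases it is $\le 0$. The only real obstacle is the standard subtlety that the infimum might fail to be attained in $(ii)$, but since we only need strict negativity and Remark \ref{Remark 5.6} supplies a single witness $(x_1,y_1)$ with $\langle x-x_1,y-y_1\rangle<0$, no attainment is needed.
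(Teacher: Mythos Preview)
Your proposal is correct and follows essentially the same approach as the paper's own proof: part $(i)$ by choosing $(x_1,y_1)=(x,y)$ together with monotonicity, part $(ii)$ directly from the maximality characterization in Remark~\ref{Remark 5.6}, and part $(iii)$ by combining the two. Your write-up is slightly more detailed than the paper's terse version, but the underlying argument is identical.
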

\begin{proof}
This lemma is a quantitative version (\cite{phe98,sim98}) of {\bf Remark \ref{Remark 5.6}}.
Assertion $(i)$ follows by taking $(x_1,y_1)=(x,y)$, assertion $(ii)$ is immediate from the 
definition of maximal monotonicity, and assertion $(iii)$ follows from $(i)$ and $(ii)$.
%%%%\qed
\end{proof}

The following example illustrates the important class of linear maximal mono-- \\
tone multifunctions. 
%
%%%{Example 5.8} 
%
\begin{example}
\label{Example 5.8}
Let $X$ and $Y$ be two instances of the same Hilbert space. If $A$ is a positive linear 
(generally not symmetric) mapping from $X$ to $Y$ ({\bf Example \ref{Example 5.2}}), 
%(~\ref{subsubsec:5.1.2})     ({\bf 5.1.2}), 
then the (single valued) monotone multifunction $T$ defined by $Tx = \left\{Ax\right\}$ is 
automatically maximal monotone (\cite{bau207,phe98,sim98}). 
\end{example}
\begin{proof}
To show this, we must prove that 
\begin{equation}
z=y-Ax \not=0 \Longrightarrow [\exists x_1\in X \mbox{, } \langle x-x_1,y-Ax_1\rangle < 0 ]\mbox{. } 
\nonumber
\end{equation}
Define $x_1=x+\lambda z$ where $\lambda$ is a scalar to be chosen later;
then, by linearity, $y-Ax_1=z-\lambda Az$, and 
\begin{equation}
\langle x-x_1,y-Ax_1\rangle = \lambda ( \lambda \langle z,Az\rangle - \langle z,z\rangle) \mbox{. } 
\nonumber
\end{equation}
If $\langle z,Az\rangle=0$, the expression above is negative for any $\lambda$ positive. 
If $\langle z,Az\rangle$ is positive, the expression above is negative for 
$0<\lambda < \frac{\langle z,z\rangle} {\langle z,Az\rangle}$.
In this second case, we can notice that according to {\bf Lemma \ref{basiclemma}} $(ii)$
\begin{equation}
\inf_{y_1\in Tx_1}\langle x-x_1,y-y_1\rangle \leq \inf_{\lambda} (\lambda^2 \langle z,Az\rangle - \lambda \langle z,z\rangle)= - \frac{1}{4}\frac{\langle z,z\rangle^2} {\langle z,Az\rangle} <0 \mbox{. } 
\nonumber
\end{equation}
We recognize the inverse of Rayleigh's quotient $\frac{\langle z,Sz\rangle} {\langle z,z\rangle}$ 
of the symmetric part $S$ of $A$. Actually, if $S$ is positive definite, then ({\bf Proposition \ref {Proposition 9.2}}) 
%~\ref{subsubsec:7.2.1}.     {\bf(7.2.1)} (???) 
%
%
\begin{equation}
\inf_{y_1\in Tx_1}\langle x-x_1,y-y_1\rangle = - \frac{1}{4}\langle z,S^{-1}z\rangle <0 \mbox{. } 
\nonumber
\end{equation}
%
%
%%%%\qed
\end{proof}

Thus, in the special case of linear mappings, the monotonicity implies the maximal monotonicity. 
Surprisingly, the continuity is also automatically ensured (\cite{sim98}). However, it is worth mentioning 
that the situation is not so simple when $A$ is only defined on a proper linear subspace of $X$ (\cite{phe98,sim98}). 

%
%5.3
%
\subsection{Fitzpatrick's function}
 
%
%%%Definition 5.9
%
\begin{definition}
\label{Definition 5.9}
Let $T$ be a maximal monotone multifunction. The associated Fitzpatrick function 
$F_{T, 2}$ is defined (\cite{fit88}) by 
\begin{equation}
F_{T,2}(x,y)=\langle x,y\rangle - \inf_{y_1\in Tx_1}\langle x-x_1,y-y_1\rangle  \mbox{. }
\nonumber
\end{equation}
\end{definition}

%
%%%Proposition 5.10
%
\begin{propos}
\label{Proposition 5.10}
Fitzpatrick's function is bounded from below by the duality product
\begin{equation}
F_{T,2}(x,y)\geq\langle x,y\rangle \nonumber
\end{equation}
and equality is attained if and only if $y\in Tx$.
\end{propos}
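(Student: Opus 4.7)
The plan is to read off the proposition directly from \textbf{Lemma \ref{basiclemma}}, since Fitzpatrick's function is built precisely from the quantity $\inf_{y_1\in Tx_1}\langle x-x_1,y-y_1\rangle$ whose sign is controlled there.

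First I would rewrite the definition as
\begin{equation}
F_{T,2}(x,y)-\langle x,y\rangle = -\inf_{y_1\in Tx_1}\langle x-x_1,y-y_1\rangle, \nonumber
\end{equation}
so the statement becomes a statement about the sign of this infimum. For the inequality $F_{T,2}(x,y)\geq \langle x,y\rangle$, it suffices to invoke assertion $(iii)$ of \textbf{Lemma \ref{basiclemma}}, which says that for every $(x,y)\in X\times Y$ the infimum is $\leq 0$, so its negative is $\geq 0$.

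For the equality case, I would argue by double implication. If $y\in Tx$, then $(x,y)\in G(T)$, and assertion $(i)$ of \textbf{Lemma \ref{basiclemma}} gives $\inf_{y_1\in Tx_1}\langle x-x_1,y-y_1\rangle=0$, hence $F_{T,2}(x,y)=\langle x,y\rangle$. Conversely, suppose $F_{T,2}(x,y)=\langle x,y\rangle$, so the infimum vanishes; if $(x,y)\notin G(T)$, assertion $(ii)$ of \textbf{Lemma \ref{basiclemma}} would force the infimum to be strictly negative, a contradiction. Therefore $(x,y)\in G(T)$, i.e.\ $y\in Tx$.

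There is no real obstacle here: the whole content has been packaged into \textbf{Lemma \ref{basiclemma}}, and \textbf{Proposition \ref{Proposition 5.10}} is just its translation into the language of Fitzpatrick's function. The only thing to be careful about is the use of maximality in the converse implication — without it, a vanishing infimum would not be enough to conclude $y\in Tx$, which is exactly why $(ii)$ of the lemma (and hence the maximality hypothesis built into \textbf{Definition \ref{Definition 5.9}}) is indispensable.
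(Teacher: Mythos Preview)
Your proof is correct and follows exactly the paper's approach: the paper simply states ``It is a direct application of \textbf{Lemma \ref{basiclemma}}'', and you have unpacked precisely that application, matching each part of the lemma to the corresponding piece of the proposition.
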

\begin{proof}
It is a direct application of {\bf Lemma \ref{basiclemma}}.
%\qed
\end{proof}

%
%%%Theorem 5.11
%
\begin{theorem}
%\emph{ [Representation theorem] }
The Fitzpatrick function $F_{T,2}$ represents the maximal monotone multifunction $T$:
\begin{enumerate}
\item[(i)] $G(T)=\left\{(x,y)\in X\times Y \mid F_{T,2}(x,y)=\langle x,y\rangle\right\}$
\item[(ii)] outside $G(T)$, $F_{T,2}(x,y) > \langle x,y\rangle$ \mbox{. } 
\end{enumerate}
\end{theorem}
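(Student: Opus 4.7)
The plan is to observe that both assertions are immediate consequences of \textbf{Proposition \ref{Proposition 5.10}}, which itself rests on \textbf{Lemma \ref{basiclemma}}. Writing out the definition
\begin{equation*}
F_{T,2}(x,y) - \langle x,y\rangle \;=\; -\inf_{y_1\in Tx_1}\langle x-x_1,y-y_1\rangle,
\end{equation*}
one sees that the equality $F_{T,2}(x,y)=\langle x,y\rangle$ is equivalent to the vanishing of the infimum on the right, and the strict inequality $F_{T,2}(x,y)>\langle x,y\rangle$ is equivalent to that infimum being strictly negative. Thus everything will be read off from the trichotomy provided by \textbf{Lemma \ref{basiclemma}}.

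For assertion $(i)$, I would prove the two inclusions separately. If $(x,y)\in G(T)$, \textbf{Lemma \ref{basiclemma}} $(i)$ gives $\inf_{y_1\in Tx_1}\langle x-x_1,y-y_1\rangle = 0$, hence $F_{T,2}(x,y)=\langle x,y\rangle$, placing $(x,y)$ in the right--hand set. Conversely, if $F_{T,2}(x,y)=\langle x,y\rangle$, the infimum vanishes; then the contrapositive of \textbf{Lemma \ref{basiclemma}} $(ii)$ (which ensures strict negativity outside the graph) forces $(x,y)\in G(T)$. This gives set equality.

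For assertion $(ii)$, I would simply invoke \textbf{Lemma \ref{basiclemma}} $(ii)$: whenever $(x,y)\notin G(T)$, $\inf_{y_1\in Tx_1}\langle x-x_1,y-y_1\rangle<0$, and translating through the definition yields $F_{T,2}(x,y)>\langle x,y\rangle$.

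Since \textbf{Lemma \ref{basiclemma}} has already packaged the nontrivial content (namely, that maximality is equivalent to the sign characterization of the infimum), there is essentially no obstacle here; the theorem is a cosmetic reformulation of \textbf{Proposition \ref{Proposition 5.10}} stating that $G(T)$ is exactly the contact set of $F_{T,2}$ with the duality product. The only point requiring care is the logical symmetry between the two cases: one must not appeal to the converse part of Proposition \ref{Proposition 5.10} to prove itself, which is why I prefer to go back one step and cite the two parts of \textbf{Lemma \ref{basiclemma}} directly.
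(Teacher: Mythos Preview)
Your proposal is correct and follows exactly the paper's approach: the paper's proof consists of the single sentence ``It is a recast of \textbf{Proposition \ref{Proposition 5.10}},'' and you have simply unpacked that recasting by tracing it back to \textbf{Lemma \ref{basiclemma}}. Your extra care in citing the two parts of the lemma separately is sound but not strictly needed, since Proposition \ref{Proposition 5.10} already contains the ``if and only if'' that yields both inclusions and the strict inequality.
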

\begin{proof}
It is a recast of {\bf Proposition \ref{Proposition 5.10}}.
%{\bf 5.3.2}.
%\qed
\end{proof}

%
%%%Remark 5.12
%
\begin{remark}
\label{Remark 5.12}
$F_{T,2}$ is globally lsc and convex. 
\end{remark}
\begin{proof}
From the definition
\begin{equation}
F_{T,2}(x,y)= \sup_{(x_1,y_1)\in G(T)} [\langle x,y_1\rangle + \langle x_1,y\rangle - \langle x_1,y_1\rangle ]
\nonumber
\end{equation}
is the supremum of a family of continuous affine real--valued functions and therefore is convex 
and lsc on the product space $X\times Y$. 
%\qed
\end{proof}

%
%%%Remark 5.13
%
\begin{remark}
With the duality product 
\begin{equation}
\langle \langle (x_1,y_1), (y_2,x_2) \rangle \rangle = \langle x_1,y_2\rangle + \langle x_2,y_1\rangle
\nonumber
\end{equation}
between $X\times Y$ and $Y\times X$, Fitzpatrick's function can be regarded as a Legendre--Moreau--Rockafellar transform. 
\end{remark}
\begin{proof}
From the definition
\begin{equation}
F_{T,2}(x,y)= \sup_{(x_1,y_1)\in G(T)} [\langle \langle (x_1,y_1), (y,x) \rangle \rangle - \langle x_1,y_1\rangle ] 
\nonumber
\end{equation}
$$\quad = \sup_{(x_1,y_1)\in X\times Y} [\langle \langle (x_1,y_1), (y,x) \rangle \rangle - \langle x_1,y_1\rangle - i_{G(T)}(x_1,y_1) ]$$
\\
which is nothing else than the value at $(y,x)$ of the conjugate of the function 
$\langle .,.\rangle + i_{G(T)}$,  where $i_{G(T)}$ is the indicator function of the graph $G(T)$.
%\qed
\end{proof}

%
%%%Example 5.14
%
\begin{example}
Let $X$ and $Y$ be two instances of the same Hilbert space. Let $S$ be a symmetric linear mapping from $X$ to $Y$. 
If $S$ is positive semi--definite, then Fitzpatrick's function of the (single valued) multifunction $T$ 
defined by $Tx = \left\{Sx\right\}$ is 
$$F_{T,2}(x,y) = \begin{cases}
\langle x,y\rangle + \frac{1}{2}\langle y-Sx,\xi\rangle &{\textnormal{ if }} (y-Sx) \in R(S) \\
\quad \quad &{\textnormal{ and }} S\xi = \frac{1}{2}(y-Sx) \\
+ \infty \;\;\;\;\;\;\; &{\textnormal{ if }} (y-Sx) \not \in R(S)
\end{cases}$$
where $R(S)$ is the range of $S$.
\end{example}
\begin{proof}
The supremum with respect to $x_1$ of the concave function $\langle x,y\rangle + \langle x_1-x,y-Sx_1\rangle$ 
is attained when $y-Sx_1 = S(x_1-x)$ or $y-Sx = 2S(x_1-x)$. The result follows with $\xi = x_1-x$ as any solution 
of the linear equation $S\xi = \frac{1}{2}(y-Sx)$ when $y-Sx$ belongs to the range $R(S)$ of $S$. 
%\qed
\end{proof}

%
%
%
%%%%%%%%%%%%%%%%%%%%%%%%%%%%%%%%%%%%%%%%%%%%%%%%%%%%%%%%%%%%%%
%%%%%%%%%%%%%%%%%%%%%%%%%%%%%%%%%%%%%%%%%%%%%%%%%%%%%%%%%%%%%%  6  Basic facts on $n$--monotone constitutive laws
%%%%%%%%%%%%%%%%%%%%%%%%%%%%%%%%%%%%%%%%%%%%%%%%%%%%%%%%%%%%%%
%
%
%6
%
\section{Basic facts on $n$--monotone constitutive laws}
\label{sec:6}

%
%%%%% 6.1                                                        SUBSECTION 6.1 n-monotonicity
%
\subsection{$n$--monotonicity}

%
%%%definition 6.1
%
\begin{definition}
\label{definition 6.1}
For $n \geq 2$, a constitutive law associated to a multifunction $T$ is $n$--monotone if (\cite{bar07}) 
\begin{equation} 
\sum_{i=1}^n\langle x_{i+1}-x_{i},y_i\rangle \leq 0   
\nonumber
\end{equation}
for any $n$ elements $(x_i,y_i)$ of $G(T)$ and $(x_{n+1},y_{n+1}) =(x_1,y_1)$.
\end{definition}

%
%%%Remark 6.2
%
\begin{remark}
We note that $2$--monotonicity reduces to ordinary monotonicity. 
\end{remark}

%
%%%Remark 6.3
%
\begin{remark}
For $n \geq 2$, $(n+1)$--monotonicity implies $n$--monotonicity. 
\end{remark}
\begin{proof}
Restrain the choice of the $(n+1)$ pairs in the graph $G(T)$ by choosing $(x_{n+1},y_{n+1}) =(x_{n},y_{n})$ 
and $(x_{n+2},y_{n+2})=(x_1,y_1$).
\end{proof}

%
%%%Example 6.4
%
\begin{example}
\label{Example 6.4}
\emph{A $2$--monotone example and a $3$--monotone counterexample.}
 
Let $X=Y=\mathbb{R}^2$
%$X=Y= R^2$ 
and $A$ be the $2\times 2$ matrix 
$\begin{bmatrix}
1 & -2\varepsilon\\
2\varepsilon & 0\\
\end{bmatrix}$ 
where $\varepsilon$ is a nonzero scalar, then the (single valued) multifunction $T$ defined by 
$Tx = \left\{Ax\right\}$ is $2$--mono-
tone but not $3$--monotone.
\end{example}
\begin{proof}
The symmetric part
$S=\begin{bmatrix}
1 & 0\\
0 & 0\\
\end{bmatrix}$ 
of $A$ is positive semi--definite, therefore $T$ is $2$--monotone but not strictly $2$--monotone. 
Take 
$x_1=\begin{bmatrix}
\varepsilon\\
0\\
\end{bmatrix}$,  
$x_2=\begin{bmatrix}
0\\
1\\
\end{bmatrix}$,  
$x_3=\begin{bmatrix}
0\\
0\\
\end{bmatrix}$,   
then
$y_1= A x_1= \varepsilon \begin{bmatrix}
1\\
2 \varepsilon\\
\end{bmatrix}$,  
$y_2 = A x_2= -2 \varepsilon \begin{bmatrix}
1\\
0\\
\end{bmatrix}$,  
$y_3=\begin{bmatrix}
0\\
0\\
\end{bmatrix}$, and   
$\langle x_2-x_1,y_1\rangle + \langle x_3-x_2,y_2\rangle + \langle x_1-x_3,y_3\rangle$
reduces to 
$\langle \begin{bmatrix}
-\varepsilon\\
1\\
\end{bmatrix}, \varepsilon \begin{bmatrix}
1\\
2\varepsilon\\
\end{bmatrix}  \rangle = \varepsilon^2$ which is positive. 

\bigskip
With a positive semi--definite symmetric part and a nonzero skew symmetric part, the linear mapping $A$ cannot 
generate a $3$--monotone multifunction $T$. 
%
%%%%\qed
\end{proof}

%
%%%6.2  
%
\subsection{$n$--monotonicity of positive semi--definite symmetric linear mappings}
\label{subsec:6.2}
%
%%% Proposition 6.5
%
\begin{propos}
\label{Proposition 6.5}
Let $X$ and $Y$ be two instances of the same Hilbert space, and $S$ a symmetric linear mapping from $X$ to $Y$. 
If $S$ is positive semi--definite, then the (single valued) multifunction $T$ defined by $Tx = \left\{Sx\right\}$ 
is $n$--monotone for any $n \geq 2$. 
\end{propos}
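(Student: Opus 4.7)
The plan is to reduce the inequality defining $n$-monotonicity to a manifestly non-positive expression by exploiting the symmetry of $S$ to produce a telescoping identity. Since $y_i = S x_i$ by definition of $T$, the quantity to be bounded becomes
\begin{equation}
\Sigma := \sum_{i=1}^n \langle x_{i+1}-x_i, S x_i\rangle, \nonumber
\end{equation}
with the cyclic convention $x_{n+1}=x_1$, and the goal is to show $\Sigma \leq 0$.

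The key step is the algebraic identity, valid for any symmetric $S$ and any two vectors $u,v$:
\begin{equation}
\langle v-u, Su\rangle \;=\; -\tfrac{1}{2}\langle v-u, S(v-u)\rangle \;+\; \tfrac{1}{2}\bigl[\langle v, Sv\rangle - \langle u, Su\rangle\bigr]. \nonumber
\end{equation}
This is checked by expanding $\langle v-u, S(v-u)\rangle$ and using $\langle v, Su\rangle = \langle u, Sv\rangle$. I would apply this identity with $u = x_i$ and $v = x_{i+1}$ and then sum over $i=1,\dots,n$. The second, "potential-like" term is a telescoping sum that vanishes precisely because of the cyclic closure $x_{n+1}=x_1$, so
\begin{equation}
\Sigma \;=\; -\tfrac{1}{2}\sum_{i=1}^n \langle x_{i+1}-x_i,\, S(x_{i+1}-x_i)\rangle. \nonumber
\end{equation}

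Finally, positive semi-definiteness of $S$ implies each term in this sum is non-negative, and hence $\Sigma \leq 0$, which is exactly the $n$-monotonicity condition of \textbf{Definition \ref{definition 6.1}}. Since $n$ was arbitrary, the multifunction $T$ is $n$-monotone for every $n\geq 2$.

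The only substantive step is spotting the polarization-style identity; once the symmetry of $S$ is used to split each summand into a quadratic form on the increment plus a discrete "exact differential", the proof is essentially forced by the cyclic boundary condition. In particular, the argument highlights the crucial role of symmetry: the skew part of $A$ in \textbf{Example \ref{Example 6.4}} is precisely what destroys the telescoping and spoils $n$-monotonicity for $n\geq 3$.
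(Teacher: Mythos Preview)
Your proof is correct and, in fact, cleaner than the route taken in the paper. The paper does not use the polarization identity; instead it fixes $x_n$ as origin, writes $x_i = x_n + z_i$, and reduces the $n$-monotonicity inequality (after factoring out $S^{1/2}$) to the positive semi-definiteness of the $(n-1)\times(n-1)$ tridiagonal matrix with $2$'s on the diagonal and $-1$'s on the sub/super-diagonals. This is carried out explicitly for $n=3$ and $n=4$ and then claimed for general $n$ by the same pattern.

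Your telescoping argument has the advantage of producing the exact closed form $\Sigma = -\tfrac{1}{2}\sum_i \langle x_{i+1}-x_i, S(x_{i+1}-x_i)\rangle$ in one line, with no appeal to the spectral theory of tridiagonal matrices and no need for the square root $S^{1/2}$; it also makes transparent that the result is nothing more than the cyclic monotonicity of $\partial\phi$ for $\phi(x)=\tfrac{1}{2}\langle x,Sx\rangle$ (cf.\ \textbf{Example \ref{Example 8.3}}). The paper's approach, while less direct here, is consistent with its later computations: the same tridiagonal matrices, now with complex off-diagonal entries, reappear in \textbf{Proposition \ref{Proposition 6.7}} to characterize the $n$-monotonicity of non-symmetric $2\times2$ matrices, where no telescoping is available.
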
 
\begin{proof}
For $n=3$, in the inequality of the {\bf Definition \ref{definition 6.1}}, let us regard $x_3$ as the origin in $X$ 
and set $x_i = x_3 + z_i$. This translation leads to force
%By this translation, the expression
%
%
\begin{equation}
\langle z_1,S z_1\rangle + \langle z_2,S z_2\rangle - \langle z_2,S z_1\rangle
\nonumber
\end{equation}
to be non--negative.
%%% sorti \\ MAIS en LATEX ça a sauté au début de la page 9 et pas trop beau avec le z2 sur la 1ère ligne et =0 qui saute sur la 2ème ligne.
%
Necessarily, $S$ has to be positive semi--definite (take $z_2=0$) and we can extract its positive 
semi--definite square root $S^\frac{1}{2}$. There is no other condition for the $3$--monotonicity because
\begin{equation}
\langle S^\frac{1}{2} z_1,S^\frac{1}{2} z_1\rangle + \langle S^\frac{1}{2} z_2,S^\frac{1}{2} z_2\rangle - \langle S^\frac{1}{2} z_2,S^\frac{1}{2} z_1\rangle 
\nonumber
\end{equation}
is non--negative as well as the $2\times 2$ matrix 
$$\begin{bmatrix}
2 & -1\\
-1 & 2\\
\end{bmatrix} \mbox{.}$$ 
For $n=4$, in the inequality of the {\bf Definition \ref{definition 6.1}}, let us regard $x_4$ as 
the origin in $X$ and set $x_i = x_4 + z_i$. 
%By this translation, the expression
This translation leads to force the non--negativity of
\begin{equation}
\langle z_1,S z_1\rangle + \langle z_2,S z_2\rangle + \langle z_3,S z_3\rangle - \langle z_2,S z_1\rangle - \langle z_3,S z_2\rangle \mbox{.} 
\nonumber
\end{equation}
%
%has to be non--negative.
%%% sorti \\ OK en LATEX: "z_2=0" est en fin de ligne et "and $z_3=0" sur la ligne suivante.
% 
Necessarily, $S$ has to be positive semi--definite (take $z_2=0$ and $z_3=0$). There is no other condition for 
the $4$--monotonicity because the above expression is non--negative as well as the $3\times 3$ matrix 
$$\begin{bmatrix}
2 & -1 & 0\\
-1& 2  & -1\\
0 & -1 & 2\\
\end{bmatrix} \mbox{.}$$
With the same arguments, one can prove that $T$ is $n$--monotone for every $n \geq 2$. This result is 
consistent with the fact that $T$ is the subdifferential of the convex potential $\phi$ defined by 
$\phi(x)=\frac{1}{2} \langle x,Sx \rangle $  (see {\bf Example \ref{Example 8.3}}). 
%
%\qed
\end{proof}

%
%%%6.3 
%
\subsection{$3$--monotonicity of non symmetric linear mappings}
%
%%%Proposition 6.6
%
\begin{propos}
\label{Proposition 6.6}
Let $X=Y$ be a Hilbert space, and $A$ a linear mapping from $X$ to $Y$. $A$ is not symmetric, the 
skew--symmetric part $W$ of $A$ is nonzero, and the symmetric part $S$ of $A$ is assumed to be positive definite. 
The (single valued) multifunction $T$ defined by $Tx = \left\{Ax\right\}$ is strictly $2$--monotone ({\bf Example \ref{Example 5.2}}).
Furthermore, if the linear mapping
\begin{equation}
H_3 = S - \frac{1}{4} A^*S^{-1}A = \frac{3}{4}S - \frac{1}{4}W S^{-1}W^* 
\nonumber
\end{equation}
is positive, then the multifunction $T$ reveals to be $3$--monotone. 
\end{propos}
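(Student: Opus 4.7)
The plan is to reduce the 3-monotonicity inequality of {\bf Definition \ref{definition 6.1}} to a quadratic form condition on $X$, using the translation trick from the proof of {\bf Proposition \ref{Proposition 6.5}} and then a one--variable convex quadratic minimization.

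First, I would write the 3--monotonicity condition explicitly: for all $x_1,x_2,x_3 \in X$,
$$\langle x_2-x_1,Ax_1\rangle + \langle x_3-x_2,Ax_2\rangle + \langle x_1-x_3,Ax_3\rangle \leq 0.$$
Since $\sum_{i=1}^{3}(x_{i+1}-x_i)=0$, the left--hand side is invariant under a common translation of $(x_1,x_2,x_3)$, so I may place the origin at $x_3$ and set $z_i = x_i - x_3$. Using that $W$ is skew--symmetric to reduce $\langle z,Az\rangle = \langle z,Sz\rangle$, the condition becomes
$$\langle z_1,Sz_1\rangle + \langle z_2,Sz_2\rangle - \langle z_2,Az_1\rangle \geq 0 \quad \mbox{for all } z_1,z_2 \in X.$$

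The key step is then a minimization in $z_2$ at fixed $z_1$. Since $S$ is positive definite and hence invertible, the map $z_2 \mapsto \langle z_2,Sz_2\rangle - \langle z_2,Az_1\rangle$ is a strictly convex quadratic whose unique minimizer is $z_2^{\ast} = \tfrac{1}{2} S^{-1}Az_1$, attaining the value $-\tfrac{1}{4}\langle z_1,A^{\ast}S^{-1}Az_1\rangle$. Substituting back, the infimum over $z_2$ of the displayed expression equals $\langle z_1, H_3\, z_1\rangle$, so positivity of $H_3$ is sufficient (and actually equivalent) for 3--monotonicity. The second form of $H_3$ then follows from $A=S+W$ and $A^{\ast}=S-W$, which give $A^{\ast}S^{-1}A = (S-W)S^{-1}(S+W) = S - WS^{-1}W = S + WS^{-1}W^{\ast}$, using $W^{\ast}=-W$ in the last equality.

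I do not anticipate a real obstacle. The translation reduction is exactly the one already employed for the symmetric case in {\bf Proposition \ref{Proposition 6.5}}, and the minimization is an elementary completion of the square in a strictly convex quadratic, legitimate because $S$ is positive definite. The only delicate point is the sign bookkeeping between $W$, $W^{\ast}$, and $A^{\ast}$, which is precisely what converts the ``$S$--part'' of $S-\tfrac{1}{4}A^{\ast}S^{-1}A$ into the factor $\tfrac{3}{4}S$ in the final expression for $H_3$.
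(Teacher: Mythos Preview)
Your proposal is correct and follows essentially the same approach as the paper: translate to set $x_3$ at the origin, reduce the $3$--monotonicity inequality to the non--negativity of $\langle z_1,Sz_1\rangle + \langle z_2,Sz_2\rangle - \langle z_2,Az_1\rangle$, and then optimize in $z_2$ to arrive at the condition $H_3\geq 0$. The only cosmetic difference is that the paper parameterizes $z_2=\Lambda z_1$ and completes the square in the operator $\Lambda$, whereas you minimize directly in $z_2$; both yield the same minimizer $z_2=\tfrac{1}{2}S^{-1}Az_1$ and the same conclusion.
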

\begin{proof}
For $n=3$, in the inequality of {\bf Definition \ref{definition 6.1}}, let us regard $x_3$ as the origin in $X$ 
and set $x_i = x_3 + z_i$. 
This translation leads to force the non--negativity of
%By this translation, the expression
%
%
\begin{equation}
2\langle z_1,S z_1\rangle + 2\langle z_2,S z_2\rangle - \langle z_2,A z_1\rangle - \langle A^*z_2, z_1\rangle \mbox{.} 
\nonumber
\end{equation}
%
%has to be non--negative.
%%%sorti \\ 
%
Let us set $z_2=\Lambda z_1$, with $\Lambda$ a linear mapping from $X$ to $X$. Then, the non--negativity 
%extends 
condition is transferred to the symmetric linear mapping 
\begin{equation}
S + \Lambda ^*S\Lambda  - \frac{1}{2}\Lambda ^*A -\frac{1}{2}A^*\Lambda = ( \Lambda ^*S^\frac{1}{2} - \frac{1}{2}A^*S^{-\frac{1}{2}} )( S^\frac{1}{2}\Lambda -     \frac{1}{2}S^{-\frac{1}{2}}A ) + S - \frac{1}{4}A^*S^{-1}A
\nonumber
\end{equation}
for every $\Lambda$. Therefore, the linear mapping $S - \frac{1}{4}AS^{-1}A^*$ has to be positive. It is easy to verify that 
$S - \frac{1}{4}A^*S^{-1}A = \frac{3}{4}S - \frac{1}{4}WS^{-1}W^*$.
%\qed
\end{proof}
%
%%%6.4 
%
\subsection{$n$--monotonicity of a non--symmetric $2\times 2$ matrix}
\label{subsec: 6.4}
%
%%%Proposition 6.7
%
\begin{propos}
\label{Proposition 6.7}
Let $A$ be a $2\times 2$ matrix with a positive definite symmetric part $S$ and a nonzero skew--symmetric part 
$$W = \begin{bmatrix}
0 & -r\\
r & 0\\
\end{bmatrix} = 
r\begin{bmatrix}
0 & -1\\
1 & 0\\
\end{bmatrix} = rJ \mbox{.}$$ 
Let us introduce an angle $\theta$ between $0$ and $\frac{\pi}{2}$ such that 
$$|r|= \sqrt{\det S}\, \tan\theta$$
Then, $A$ is $n$--monotone if and only if $$n \theta \leq \pi \mbox{.}$$ 
\end{propos}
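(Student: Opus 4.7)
The plan is to reduce the $n$-monotonicity inequality
\[
\sum_{i=1}^n \langle x_{i+1}-x_i,\,Ax_i\rangle \leq 0
\]
(with the cyclic convention $x_{n+1}=x_1$) to a Hermitian form on $\mathbb{C}^n$ and diagonalise it via the discrete Fourier transform on $\mathbb{Z}/n\mathbb{Z}$. Since $S$ is positive definite, a single $2$-dimensional normalisation brings $A$ to a canonical form that is essentially a rotation.

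Specifically, I would pass to $z_i=S^{1/2}x_i$, so that $A$ is replaced by $A'=S^{-1/2}AS^{-1/2}=I+\tilde r J$. The real $2\times 2$ identity $M^{T}JM=(\det M)\,J$ applied to the symmetric matrix $M=S^{-1/2}$ yields $\tilde r=r/\sqrt{\det S}$, so that $|\tilde r|=\tan\theta$. Reflecting one coordinate axis (which changes $J$ into $-J$ and preserves $n$-monotonicity) allows us to assume $\tilde r=\tan\theta>0$. Identifying $\mathbb{R}^2$ with $\mathbb{C}$ makes $J$ act as multiplication by $i$ and $A'$ as multiplication by $1+i\tan\theta=e^{i\theta}/\cos\theta$. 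Using $\langle u,v\rangle=\mathrm{Re}(\bar{u}v)$ together with $\langle z_i,Jz_i\rangle=0$, a short computation rewrites the $n$-monotonicity inequality as
\[
\mathrm{Re}\!\left[e^{i\theta}\sum_{i=1}^n \bar{z}_{i+1}\,z_i\right] \leq \cos\theta\sum_{i=1}^n |z_i|^2 \quad\text{for all } z_1,\dots,z_n\in\mathbb{C}.
\]

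Next, I would diagonalise the circulant form on the left in the Fourier basis $v_k=(\zeta_k^{\,j})_{j}/\sqrt{n}$, $\zeta_k=e^{2i\pi k/n}$, which consists of eigenvectors of the cyclic shift $(Cz)_i=z_{i+1}$ with eigenvalues $\zeta_k$. Writing $z=\sum_k c_k v_k$ yields $\sum_i\bar{z}_{i+1}z_i=\sum_k|c_k|^2\bar{\zeta}_k$ and $\sum_i|z_i|^2=\sum_k|c_k|^2$, so the inequality becomes
\[
\sum_{k=0}^{n-1}|c_k|^2\!\left[\cos\!\left(\theta-\frac{2\pi k}{n}\right)-\cos\theta\right] \leq 0
\]
for arbitrary $(c_0,\dots,c_{n-1})\in\mathbb{C}^n$; independence of the $|c_k|^2$ shows that this is equivalent to the scalar family $\cos(\theta-2\pi k/n)\leq\cos\theta$ for $k=0,1,\dots,n-1$, which captures both directions of the ``if and only if''. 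For $\theta\in(0,\pi/2)$ the binding constraint is at $k=1$, since $2\pi/n$ is the closest non-zero multiple of $2\pi/n$ to $0$ on the circle; monotonicity of cosine on $[0,\pi]$ then reduces the condition to $\theta\leq 2\pi/n-\theta$, that is $n\theta\leq\pi$.

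The main subtlety lies in the normalisation step: the simultaneous reduction of $S$ to $I$ and of $W$ to a scalar multiple of $J$ exploits the two-dimensional identity $M^{T}JM=(\det M)J$, which has no direct analogue in higher dimension; once this reduction is achieved, the rest is classical harmonic analysis on the cyclic group $\mathbb{Z}/n\mathbb{Z}$.
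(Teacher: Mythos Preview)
Your argument is correct. Both you and the paper begin with the same normalisation $z_i=S^{1/2}x_i$, using the two--dimensional identity $S^{-1/2}JS^{-1/2}=(\det S)^{-1/2}J$ to reduce to $A'=I+\tilde r J$ with $|\tilde r|=\tan\theta$. From there the routes diverge: the paper exploits translation invariance to set $x_n=0$, obtaining a Hermitian \emph{tridiagonal} Toeplitz matrix of size $(n-1)\times(n-1)$ whose eigenvalues $2-\frac{2}{\cos\theta}\cos(k\pi/n)$ it quotes from the literature; you instead keep the full cyclic symmetry, identify $\mathbb R^2\cong\mathbb C$, and diagonalise the resulting $n\times n$ \emph{circulant} form by the discrete Fourier transform on $\mathbb Z/n\mathbb Z$, getting the scalar conditions $\cos(\theta-2\pi k/n)\le\cos\theta$. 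Your approach is more self--contained (the circulant eigenvalues are immediate, whereas the tridiagonal ones require a citation) and makes the translation mode visible as the $k=0$ zero eigenvalue rather than eliminating it by hand. One small point to tighten: when you invoke ``monotonicity of cosine on $[0,\pi]$'' at $k=1$ you are implicitly assuming $2\pi/n\ge\theta$; it is worth noting explicitly that if $\theta>2\pi/n$ then $0<\theta-2\pi/n<\theta<\pi/2$ forces $\cos(\theta-2\pi/n)>\cos\theta$, so the inequality already fails, and likewise that the product formula $\cos(\theta-2\pi k/n)-\cos\theta=2\sin(\pi k/n)\sin(\theta-\pi k/n)$ shows the remaining $k$ are automatically handled once $\theta\le\pi/n$.
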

\begin{proof}
Let $x_1, x_2, \dots, x_{n}$ be $n$ points in $X$. To study the $n$--monotonicity of $A$, 
we have to verify that the sum 
$$\sum_{i=1}^n\langle x_{i+1}-x_{i},Ax_i\rangle$$
is negative under the closure hypothesis $x_{n+1}=x_1$. Because of the linearity of $A$, this sum 
is invariant by translation. Let us regard $x_n$ as the origin in $X$, then the sum reduces to
$$\sum_{i=1}^{n-1}\langle x_{i+1}-x_{i},Ax_i\rangle = \sum_{i=1}^{n-1}\langle x_{i+1},(S + rJ)x_i\rangle - \sum_{i=1}^{n-1}\langle x_{i},S x_i\rangle \mbox {.}$$
The symmetric part $S$ of $A$ being positive definite, we can substitute each $x_i$ by $S^{-\frac{1}{2}}x_i$, 
and the sum becomes
$$\sum_{i=1}^{n-1}\langle x_{i+1},(I + \frac{r}{\sqrt{\text{det}S}} J)x_i\rangle - \sum_{i=1}^{n-1}\langle x_{i},x_i\rangle $$
%$$\sum_{i=1}^{n-1}\left\langle x_{i+1},\left(I + \frac{r}{\sqrt{\text{det}S}}\right)x_i\right\rangle - \sum_{i=1}^{n-1}\langle x_{i},x_i\rangle $$
%
where we have used the identity $S^{-\frac{1}{2}}JS^{-\frac{1}{2}} = \frac{1}{\sqrt{\text{det}S}} J$. 
Therefore, the $n$--monotonicity of $A$ is governed by the variable $t = \frac{r}{\sqrt{\text{det}S}}$. 
It is equivalent to study the positivity of the real symmetric $(2n-2)\times (2n-2)$ matrix
$$\begin{bmatrix}
   2I  & -I+tJ  & \vdots & 0      & 0\\
-I-tJ  & 2I     & \vdots & 0      & 0\\
\cdots & \cdots & \ddots & \cdots & \cdots \\
     0 & 0      & \vdots & 2I     & -I+tJ\\
     0 & 0      & \vdots & -I-tJ  & 2I\\
\end{bmatrix}$$
or of the Hermitian $(n-1)\times (n-1)$ complex matrix
$$\begin{bmatrix}
2 & -(1-it) & \vdots & 0 & 0\\
-(1+it) & 2  & \vdots & 0 & 0\\
\cdots & \cdots  & \ddots & \cdots & \cdots\\
0& 0  & \vdots & 2 & -(1-it)\\
0& 0  & \vdots & -(1+it) & 2\\
\end{bmatrix} \mbox{.}$$
The eigenvalues of this tridiagonal Hermitian $(n-1)\times (n-1)$ matrix are well--known (\cite{gre78}). They take the $(n-1)$ real values
$$2-2\sqrt{1+t^2} \cos \left(k\frac{\pi}{n}\right) = \frac{2}{\cos\theta}\left[\cos\theta - \cos \left(k\frac{\pi}{n}\right)\right]$$
indexed from $k=1$ to $k=n-1$. The $n$--monotonicity is ensured if the smallest eigenvalue 
$\frac{2}{\cos\theta}\left[\cos\theta - \cos \left(\frac{\pi}{n}\right)\right]$ is positive, and the conclusion follows.
%\qed
\end{proof}
%
%6.5 
%
\subsection{Maximality}
%
%%%Definition 6.8
%
\begin{definition}
\label{Definition 6.8}
A $n$--monotone multifunction $T$ is maximal if there is no proper $n$-monotone enlargement of $T$. 
\end{definition}

%
%6.6 
%
\subsection{Fitzpatrick's sequence}

%
%%%Definition 6.9
%
\begin{definition}
\label{Definition 6.9}
For $n\geq 2$ and $(x,y)\in X\times Y$, let $\left(x_i,y_i\right)$ be 
$n-1$ elements of $G(T)$, indexed from $i=1$ to $n-1$, define $\left(x_n,y_n\right)=(x,y)$, and 
close the loop by $\left(x_{n+1},y_{n+1}\right)=\left(x_1,y_1\right)$. Fitzpatrick's sequence (\cite{bar07}) is defined by
\begin{equation}
F_{T,n}(x,y)=\langle x,y\rangle + 
\sup_{y_i\in Tx_i}\sum_{\lambda=1}^n\langle x_{\lambda +1}-{x_\lambda},y_{\lambda}
\rangle \mbox{.}
\nonumber
\end{equation}
\end{definition} 
%
%%%Remark 6.10
%
\begin{remark}
For $n=2$, we recover the function $F_{T,2}$, originally proposed by Fitzpatrick to study monotone operators.
\end{remark}

%
%%%Proposition 6.11
%
\begin{propos} 
\label{Proposition 6.11}
Fitzpatrick's sequence is increasing: for $n\geq 3$,
\begin{equation}
F_{T,n}(x,y)\geq F_{T,n-1}(x,y) \mbox{.} 
\nonumber
\end{equation}
\end{propos}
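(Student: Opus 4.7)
The plan is to observe that every admissible configuration in the supremum defining $F_{T,n-1}(x,y)$ lifts to an admissible configuration for $F_{T,n}(x,y)$ that yields the same cyclic sum. Consequently the supremum defining $F_{T,n}$ dominates the one defining $F_{T,n-1}$, and the monotonicity follows. The geometric idea is to pad an $(n-1)$-cycle through $(x,y)$ into an $n$-cycle through $(x,y)$ by duplicating one of its vertices in $G(T)$; because a duplicated vertex contributes a zero increment $\langle x_i-x_i,y_i\rangle$, the full cyclic sum is preserved under this operation.

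Concretely, starting from $(x_1,y_1),\ldots,(x_{n-2},y_{n-2})\in G(T)$ with the closure data $(x_{n-1},y_{n-1})=(x,y)$ and $(x_n,y_n)=(x_1,y_1)$ prescribed by the definition of $F_{T,n-1}$, I would build an $n$-configuration by setting $(x'_1,y'_1)=(x_1,y_1)$, $(x'_2,y'_2)=(x_1,y_1)$, then $(x'_{i+1},y'_{i+1})=(x_i,y_i)$ for $2\leq i\leq n-2$, together with the forced $(x'_n,y'_n)=(x,y)$ and $(x'_{n+1},y'_{n+1})=(x'_1,y'_1)$. This supplies the $n-1$ graph points required by \textbf{Definition \ref{Definition 6.9}}. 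The newly inserted summand is $\langle x'_2-x'_1,y'_1\rangle=\langle x_1-x_1,y_1\rangle=0$, while all other summands reindex to reproduce exactly the cyclic sum in the definition of $F_{T,n-1}(x,y)$; hence
\begin{equation}
\sum_{\lambda=1}^{n}\langle x'_{\lambda+1}-x'_\lambda,y'_\lambda\rangle=\sum_{\lambda=1}^{n-1}\langle x_{\lambda+1}-x_\lambda,y_\lambda\rangle.\nonumber
\end{equation}
Taking suprema over all admissible choices yields $F_{T,n}(x,y)\geq F_{T,n-1}(x,y)$.

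There is no genuine obstacle in this argument: the proof is essentially a bookkeeping verification that inserting a repeated vertex contributes a zero term and that the remaining indices shift consistently. The only mild care required is in matching the cyclic closure conventions at the two indices $n-1$ and $n$, and in noting that the argument is independent of which vertex one chooses to duplicate, so one has some freedom in organizing the reindexing.
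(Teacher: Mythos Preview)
Your proof is correct and follows essentially the same approach as the paper's: both pad an $(n-1)$-cycle to an $n$-cycle by duplicating one vertex of $G(T)$, exploiting that the inserted term vanishes. The only cosmetic difference is that the paper duplicates the vertex $(x_{n-2},y_{n-2})$ (setting $x_{n-1}=x_{n-2}$) whereas you duplicate $(x_1,y_1)$, a freedom you yourself note at the end.
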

\begin{proof} 
While evaluating the supremum defining $F_{T,n}(x,y)$, choose a sequence 
such that $x_{n-1}=x_{n-2}$ and compare with $F_{T,n-1}(x,y)$.
%
%\qed
\end{proof}

%
%%%Proposition 6.12
%
\begin{propos}
%\label{Proposition633}
Every function of Fitzpatrick's sequence is bounded from below by the duality product:
\begin{equation}
F_{T,n}(x,y)\geq\langle x,y\rangle 
\nonumber
\end{equation}
and equality is attained if and only if $y\in Tx$.
\end{propos}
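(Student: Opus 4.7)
My plan is to combine a direct admissible construction with the $n$-monotonicity of $T$ and the increasing property of Fitzpatrick's sequence established in \textbf{Proposition \ref{Proposition 6.11}}. For the lower bound I would pick any $y^{*}\in Tx$ and set $(x_i,y_i)=(x,y^{*})$ for $i=1,\dots,n-1$. Together with $(x_n,y_n)=(x,y)$ and the closure $(x_{n+1},y_{n+1})=(x_1,y_1)$ this is an admissible configuration in which every $x_\lambda$ equals $x$; each difference $x_{\lambda+1}-x_\lambda$ therefore vanishes, the sum in \textbf{Definition \ref{Definition 6.9}} is identically $0$, the supremum is at least $0$, and hence $F_{T,n}(x,y)\ge\langle x,y\rangle$.

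For the \emph{if} direction, assume $y\in Tx$. Then $(x_n,y_n)=(x,y)$ already lies in $G(T)$, so for any $(x_1,y_1),\dots,(x_{n-1},y_{n-1})\in G(T)$ the full $n$-tuple sits in $G(T)$. Applying the $n$-monotonicity inequality of \textbf{Definition \ref{definition 6.1}} yields $\sum_{\lambda=1}^{n}\langle x_{\lambda+1}-x_\lambda,y_\lambda\rangle\le 0$, so the supremum is $\le 0$. Combined with the lower bound, the supremum equals $0$ and $F_{T,n}(x,y)=\langle x,y\rangle$.

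For the \emph{only if} direction, assume $F_{T,n}(x,y)=\langle x,y\rangle$. \textbf{Proposition \ref{Proposition 6.11}} together with the lower bound sandwiches $\langle x,y\rangle\le F_{T,2}(x,y)\le F_{T,n}(x,y)=\langle x,y\rangle$, forcing $F_{T,2}(x,y)=\langle x,y\rangle$, and I would then appeal to \textbf{Proposition \ref{Proposition 5.10}} to conclude $y\in Tx$. The main obstacle is that \textbf{Proposition \ref{Proposition 5.10}} rests on \textbf{Lemma \ref{basiclemma}} and requires the \emph{maximal monotonicity} of $T$, whereas the natural hypothesis here is only \emph{maximal $n$-monotonicity} (\textbf{Definition \ref{Definition 6.8}}). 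If the weaker hypothesis does not automatically upgrade, one must argue directly by contraposition from \textbf{Definition \ref{Definition 6.8}}: if $y\notin Tx$, the strict enlargement $G(T)\cup\{(x,y)\}$ must break $n$-monotonicity, producing an $n$-cycle with strictly positive sum. After cyclic relabelling one may place $(x,y)$ at position $n$; when it appears only there, strict positivity directly contradicts the assumption that the supremum in \textbf{Definition \ref{Definition 6.9}} is zero. The delicate step is handling cycles in which $(x,y)$ occurs several times, by splitting at a repeated occurrence into two shorter sub-cycles and exploiting the lower-order monotonicity granted by Remark 6.3 until only one occurrence of $(x,y)$ remains; this repetition bookkeeping is where the technical difficulty concentrates.
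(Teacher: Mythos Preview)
Your reduction of the ``only if'' direction via Proposition~\ref{Proposition 6.11} and Proposition~\ref{Proposition 5.10} is exactly the paper's argument; its entire proof is the single sentence ``This is a generalization of Proposition~\ref{Proposition 5.10} by Proposition~\ref{Proposition 6.11}.'' The paper does not give separate arguments for the lower bound or the ``if'' direction---both are absorbed into that same one-line reduction to the $n=2$ case.

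Your direct lower-bound argument has a small gap: choosing $y^{*}\in Tx$ presupposes $Tx\neq\emptyset$, so it says nothing when $x\notin D(T)$. The paper sidesteps this by not arguing directly: it uses $F_{T,n}\ge F_{T,2}\ge\langle x,y\rangle$, the second inequality being Proposition~\ref{Proposition 5.10} (hence Lemma~\ref{basiclemma}(iii)), which holds for every $(x,y)$ under the maximal monotone hypothesis. Your ``if'' argument via $n$-monotonicity is correct and more explicit than anything the paper writes down.

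Your concern about hypotheses is well placed, but the paper simply does not face it here: its one-line proof implicitly inherits the \emph{maximal monotone} assumption from Definition~\ref{Definition 5.9} and Proposition~\ref{Proposition 5.10}. The representation under the weaker hypothesis of maximal $n$-monotonicity is stated separately as the next theorem and proved only by reference to \cite{bar07,bau107}. Your contrapositive argument from Definition~\ref{Definition 6.8}, with cycle-splitting at repeated occurrences of $(x,y)$, therefore goes beyond what the paper attempts in this proposition; it is essentially the content the paper outsources. (The splitting does work: two occurrences at positions $i<j$ have $x_i=x_j$, so the two sub-cycle sums add to the full sum, one remains strictly positive, and you may pad by repeating a graph point to restore length $n$ without changing the sum.)
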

\begin{proof}
This is a generalization of {\bf Proposition \ref{Proposition 5.10}} by {\bf Proposition \ref{Proposition 6.11}}.
% 
%\qed
\end{proof}

%
%%%Theorem 6.13
%
\begin{theorem} 
Let $T:X\longrightarrow 2^Y$ be maximal $n$--monotone, for some $n\geq 2$; then, $F_{T,n}$ represents $T$:

\begin{enumerate}
\item[(i)] $G(T)=\left\{(x,y)\in X\times Y \mid F_{T,n}(x,y)=\langle x,y\rangle\right\}$
\item[(ii)] outside $G(T)$, $F_{T,n}(x,y) > \langle x,y\rangle$ \mbox{. } 
\end{enumerate}
\end{theorem}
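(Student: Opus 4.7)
The plan is to establish the equivalence $(x,y)\in G(T)\iff F_{T,n}(x,y)=\langle x,y\rangle$; combined with the uniform lower bound $F_{T,n}(x,y)\geq\langle x,y\rangle$ from \textbf{Proposition \ref{Proposition 6.11}}'s consequence (\textbf{Proposition} before this theorem), this delivers both (i) and (ii) simultaneously, in the spirit of the recast used for \textbf{Theorem 5.11}.

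For the easy direction $(x,y)\in G(T)\Rightarrow F_{T,n}(x,y)=\langle x,y\rangle$, I observe that the supremum in \textbf{Definition \ref{Definition 6.9}} is taken over closed $n$-loops whose first $n-1$ vertices already lie in $G(T)$ and whose $n$-th vertex is the fixed pair $(x,y)$; once $(x,y)$ itself lies in $G(T)$, every such loop is a genuine $n$-cycle of elements of $G(T)$, so the $n$-monotonicity of $T$ (\textbf{Definition \ref{definition 6.1}}) forces each cyclic sum $\sum_{\lambda=1}^{n}\langle x_{\lambda+1}-x_{\lambda},y_{\lambda}\rangle$ to be non-positive. Taking the supremum yields $F_{T,n}(x,y)\leq\langle x,y\rangle$, and the reverse inequality gives equality.

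The main obstacle is the converse: assuming $(x,y)\notin G(T)$, I must produce a cyclic sum that is strictly positive. I invoke maximal $n$-monotonicity (\textbf{Definition \ref{Definition 6.8}}): since $G(T)\cup\{(x,y)\}$ is a proper enlargement of $G(T)$, it fails to be $n$-monotone, so there exist $n$ pairs in $G(T)\cup\{(x,y)\}$ whose closed cyclic sum is strictly positive, and because $T$ alone is $n$-monotone the pair $(x,y)$ must occur at least once in this offending cycle. The subtle point is that $(x,y)$ may occur several times. My plan is to split the offending closed $n$-loop at its successive occurrences of $(x,y)$ into $m$ sub-loops of lengths $k_{1},\dots,k_{m}$ with $k_{1}+\cdots+k_{m}=n$, each passing through $(x,y)$ exactly once; the total cyclic sum decomposes as the sum of the sub-loop sums, so at least one sub-loop — necessarily of length $k\geq 2$ since length-one sub-loops contribute $\langle x-x,y\rangle=0$ — has strictly positive sum. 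The intermediate vertices of this sub-loop all lie in $G(T)$, its sum is a legitimate candidate in the supremum defining $F_{T,k}(x,y)-\langle x,y\rangle$, hence $F_{T,k}(x,y)>\langle x,y\rangle$. Iterating \textbf{Proposition \ref{Proposition 6.11}} (Fitzpatrick's sequence is increasing in $n$) gives $F_{T,n}(x,y)\geq F_{T,k}(x,y)>\langle x,y\rangle$, which establishes (ii) and closes the characterization (i).
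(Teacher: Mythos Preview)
Your argument is correct and, in fact, substantially more informative than what the paper offers: the paper's own proof is a bare citation to \cite{bar07,bau107}, with no details given. The forward implication you give is exactly the standard one. For the converse, the delicate point---that the offending $n$-cycle witnessing the failure of $n$-monotonicity of $G(T)\cup\{(x,y)\}$ may visit $(x,y)$ several times---is handled cleanly by your splitting into sub-loops: a direct check shows that if $(x,y)$ occurs at positions $i_1<\cdots<i_m$ of the closed cycle, the full cyclic sum is the sum of the $m$ shorter cyclic sums obtained by closing each arc at $(x,y)$, and length-one arcs contribute $\langle x-x,y\rangle=0$. One positive sub-loop of length $k\in\{2,\ldots,n\}$ then yields $F_{T,k}(x,y)>\langle x,y\rangle$, and \textbf{Proposition~\ref{Proposition 6.11}} lifts this to $F_{T,n}$. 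This is the argument of Bartz--Bauschke--Borwein--Reich--Wang \cite{bar07} to which the paper refers; so although you reach the result by a route the paper does not display, it is essentially the same route as the cited source.

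One cosmetic remark: you invoke the ``uniform lower bound $F_{T,n}(x,y)\geq\langle x,y\rangle$'' from the preceding Proposition, but you do not actually need it in that generality. For $(x,y)\in G(T)$ you obtain $F_{T,n}(x,y)\geq\langle x,y\rangle$ directly by inserting $(x_i,y_i)=(x,y)$ for all $i$, and for $(x,y)\notin G(T)$ you prove the strict inequality outright. Phrasing it this way avoids any appearance of circularity with that Proposition's ``if and only if'' clause.
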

\begin{proof}
We refer the reader to (\cite{bar07,bau107}), where it is shown that $F_{T,n}$ 
is well suited to study $n$--monotone operators.
%{\bf 5.3.2}.
%\qed
\end{proof}

%
%%%Remark 6.14
%
\begin{remark}
%[Remark 6.14]
$F_{T,n}$ is globally lsc and convex. 
\end{remark}
\begin{proof}
From the definition, as $F_{T,2}$ was globally lsc and convex ({\bf Remark \ref{Remark 5.12}}), $F_{T,n}$ is the upper hull 
of a family of continuous and affine real--valued functions and therefore is convex and lsc on the 
product space $X\times Y $.
%\qed 
\end{proof}

We present next a recursive formula allowing in certain cases an iterative computation of 
Fitzpatrick's sequence of a monotone operator. 

%
%
%
%%%%%%%%%%%%%%%%%%%%%%%%%%%%%%%%%%%%%%%%%%%%%%%%%%%%%%%%%%%%%%
%%%%%%%%%%%%%%%%%%%%%%%%%%%%%%%%%%%%%%%%%%%%%%%%%%%%%%%%%%%%%%  7  Recursion formula
%%%%%%%%%%%%%%%%%%%%%%%%%%%%%%%%%%%%%%%%%%%%%%%%%%%%%%%%%%%%%%
%
%
%%%7 
%
\section{Recursion formula}
\label{sec:7} 
%\label{subsubsec: 6.3.6}
%
%%%Proposition 7.1
%
\begin{propos}
\label{Proposition 7.1} 
Let $T:X\longrightarrow 2^Y$ be a $(n+1)$--monotone multifunction ($n\geq 2$), then
\begin{equation}
%F_{T,{n+1}}(x,y)= \sup_{\eta \in T \xi} [F_{T,n}(\xi,y) + \langle x - \xi,\eta \rangle ]\qquad  \forall (x,y)\in X\times Y \mbox{. }
\forall (x,y)\in X\times Y \mbox{, }  \qquad F_{T,{n+1}}(x,y)= \sup_{\eta \in T \xi} [F_{T,n}(\xi,y) + \langle x - \xi,\eta \rangle ] \mbox{. } 
\nonumber
\end{equation}
\end{propos}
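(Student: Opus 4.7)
The plan is to expand Definition \ref{Definition 6.9} for $F_{T,n+1}(x,y)$ and peel off the ``last free'' vertex of the closed loop. Writing out the cyclic sum
$$\sum_{\lambda=1}^{n+1}\langle x_{\lambda+1}-x_\lambda,y_\lambda\rangle$$
with $(x_{n+1},y_{n+1})=(x,y)$ and $(x_{n+2},y_{n+2})=(x_1,y_1)$, I would rename $(\xi,\eta):=(x_n,y_n)\in G(T)$ and isolate the two end contributions $\langle x-\xi,\eta\rangle$ (from $\lambda=n$) and $\langle x_1-x,y\rangle$ (from $\lambda=n+1$). The remaining block $\sum_{\lambda=1}^{n-1}\langle x_{\lambda+1}-x_\lambda,y_\lambda\rangle$ still contains $x_n=\xi$ through its last term $\langle \xi-x_{n-1},y_{n-1}\rangle$.

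The key algebraic move is to split
$$\langle x_1-x,y\rangle=\langle x_1-\xi,y\rangle+\langle \xi-x,y\rangle.$$
Then the block $\sum_{\lambda=1}^{n-1}\langle x_{\lambda+1}-x_\lambda,y_\lambda\rangle+\langle x_1-\xi,y\rangle$ is \emph{exactly} the cyclic sum defining $F_{T,n}(\xi,y)$, with the chain $(x_1,y_1),\dots,(x_{n-1},y_{n-1}),(\xi,y)$ closed back to $(x_1,y_1)$.

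Taking the supremum in two stages, I fix $(\xi,\eta)\in G(T)$ and first take the sup over $(x_i,y_i)\in G(T)$ for $i=1,\dots,n-1$. By Definition \ref{Definition 6.9} this inner supremum equals $F_{T,n}(\xi,y)-\langle\xi,y\rangle$. Gathering the outside summands gives
$$\langle x,y\rangle+\langle x-\xi,\eta\rangle+\langle \xi-x,y\rangle-\langle\xi,y\rangle=\langle x-\xi,\eta\rangle,$$
since $\langle x,y\rangle+\langle\xi-x,y\rangle-\langle\xi,y\rangle=0$. Taking the outer sup over $(\xi,\eta)\in G(T)$ produces the announced formula
$$F_{T,n+1}(x,y)=\sup_{\eta\in T\xi}\bigl[F_{T,n}(\xi,y)+\langle x-\xi,\eta\rangle\bigr].$$

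The main obstacle is purely combinatorial bookkeeping: keeping the cyclic indices consistent between the two closed loops (length $n+1$ on the left, length $n$ on the right) and correctly attributing each term of the sum either to the inner $F_{T,n}$ or to the extra coupling $\langle x-\xi,\eta\rangle$. The $(n+1)$-monotonicity hypothesis does not play a role in the algebraic identity itself, but guarantees (via Proposition \ref{Proposition 6.11} and the representation theorem) that both sides are finite and meaningful as representations of $T$.
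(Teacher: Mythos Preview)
Your proof is correct and follows essentially the same approach as the paper's: both arguments single out the vertex $(x_n,y_n)=(\xi,\eta)\in G(T)$ in the length-$(n+1)$ cyclic sum and identify the remaining terms with the length-$n$ cyclic sum defining $F_{T,n}(\xi,y)$. The only cosmetic difference is the direction of the argument (you start from $F_{T,n+1}$ and peel off, the paper starts from $F_{T,n}(\xi,y)+\langle x-\xi,\eta\rangle$ and supremizes over $(\xi,\eta)$), and your observation that the $(n+1)$-monotonicity is not used in the algebraic identity itself is accurate.
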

\begin{proof}
For $n\geq 2$ and $(\xi,\eta)\in X\times Y$, let $(x_i,y_i)$ be $n-1$ elements of $G(T)$ 
indexed from $i=1$ to $n-1$.
By definition,
%6.3.1
%
%
\bigskip

$F_{T,n}(\xi,y) + \langle x - \xi,\eta \rangle =$ 
$$ = \sup_{y_i\in Tx_i} (\sum_{i=1}^{n-2} \langle x_{i+1}-x_i,y_i \rangle + \langle x_1,y \rangle + \langle \xi - x_{n-1},y_{n-1} \rangle ) 
%  \\ ???
+ \langle x - \xi,\eta \rangle \mbox{. }$$
Following the supremum over the $n-1$ elements $(x_i,y_i)$ of $G(T)$, by supremizing over 
$(\xi,\eta)\in G(T)$, we recognize (\cite{bau307}) 
$$F_{T,{n+1}}(x,y) = \displaystyle{ \sup_{\substack{y_i\in Tx_i \\ y_n\in Tx_n}} } ( \displaystyle{ \sum_{i=1}^{n-2} } \langle x_{i+1}-x_i,y_i \rangle 
+ \langle x_n - x_{n-1},y_{n-1} \rangle + \langle x_1,y \rangle + \langle x - x_n,y_n \rangle )$$
where the role of $(x_n,y_n)$ is played by $(\xi,\eta)$. 
%\qed
\end{proof}

%
%
%
%%%%%%%%%%%%%%%%%%%%%%%%%%%%%%%%%%%%%%%%%%%%%%%%%%%%%%%%%%%%%%
%%%%%%%%%%%%%%%%%%%%%%%%%%%%%%%%%%%%%%%%%%%%%%%%%%%%%%%%%%%%%%  8  Fitzpatrick's sequence for a GSM
%%%%%%%%%%%%%%%%%%%%%%%%%%%%%%%%%%%%%%%%%%%%%%%%%%%%%%%%%%%%%%
%
%
%8 
%
\section{Fitzpatrick's sequence for a GSM}
\label{sec:8}

%
%%%Definition 8.1
%
\begin{definition} 
%\emph Cyclic monotonicity: 
A constitutive law associated to a multifunction $T$ is cyclically monotone if it is 
$n$--monotone for every $n\geq 2$. 
\end{definition}

%
%%%Example 8.2
%
\begin{example}
If $X=Y$ is a Hilbert space, and $A$ is a symmetric positive semi--definite linear mapping 
from $X$ to $Y$, then the (single valued) multifunction $T$ defined by $Tx = \left\{Ax\right\}$ 
is cyclically monotone (see {\bf Proposition \ref{Proposition 6.5}}).  
\end{example}

%
%%%Example 8.3
%
\begin{example}
\label{Example 8.3} 
If $T$ is the subdifferential of a convex lsc potential $\phi$, then $T=\partial\phi$ 
is cyclically monotone (\cite{roc66}).
\end{example}
%
%
%%%Definition 8.4
%
\begin{definition} 
A cyclically monotone multifunction $T$ is maximal if $T$ is cyclically monotone and 
%there is no proper cyclically monotone extension of $T$.  
no proper extension of T is cyclically monotone.
\end{definition}

%
%%%8.1 
%
\subsection{Recovery of the bipotential for a GSM}

A convex potential which has at least one finite value (not identically $+\infty$) is called proper. 
When the constitutive law of a GSM is described by a convex, lsc, and proper potential $\phi$, the 
multifunction $T=\partial\phi$ is maximal monotone and cyclically monotone, hence maximal cyclically monotone.
For $n\geq 2$, every $n$--monotonicity is captured (\cite{bar07}) by Fitzpatrick's function $F_{T,n}(x,y)$. 
Moreover, Fitzpatrick's sequence admits a pointwise limit
\begin{equation}
F_{T,\infty}(x,y)=\sup_{n\geq 2}F_{T,n}(x,y) 
\nonumber
\end{equation}
which is nothing else than the sum of the potential and its conjugate (\cite{bar07,bau11,bor10})
\begin{equation}
F_{T,\infty}(x,y)=\phi(x) + \phi^*(y)
\nonumber
\end{equation}
and we recover the GSM-specific separated bipotentials.

Conversely, when a multifunction $T$ is maximal cyclically monotone, a constructive theorem (\cite{bau11}) 
due to R.T. Rockafellar (\cite{roc170,roc66}) and J.J. Moreau (\cite{mor03}) proves the existence 
%(\cite{mor76}) 
of a convex, lsc, and proper potential $\phi$ such that $T=\partial\phi$. The method for retrieving $\phi(x)$ 
consists in fixing $y$ in $F_{T,\infty}(x,y)$. By duality, the method for retrieving $\phi^*(y)$ consists in fixing 
$x$ in $F_{T,\infty}(x,y)$. Actually, the sequence constructed by Fitzpatrick is a genial rewriting of the 
Moreau--Rockafellar theorem. 

% 
%%%8.2 
%
\subsection{Fitzpatrick's sequence for an indicator function}
When the potential $\phi$ is the indicator function $i_K$ of a convex $K$, then Fitzpatrick's sequence of 
the multifunction $T=\partial\phi$ is reduced to a single element (\cite{bar07})
\begin{equation}
\forall n\geq 2, F_{T,n}(x,y)=F_{T,\infty}(x,y)=i_K(x)+i_K^*(y)  \mbox{. }
\nonumber
\end{equation}
%
%

%
%%%8.3 
%
\subsection{Fitzpatrick's sequence for a support function}

By duality, the same equality holds (\cite{bar07}) when the potential $\phi$ is the 
support function $i_K^*$ of a convex set $K$. 

\bigskip
In the next section we will emphasize (and sometimes repeat) in the framework of continuum mechanics 
some notions and results concerning the linear monotone operators previously presented 
in sections \ref{sec:5}, \ref{sec:6} and \ref{sec:8}.

%
%
%
%%%%%%%%%%%%%%%%%%%%%%%%%%%%%%%%%%%%%%%%%%%%%%%%%%%%%%%%%%%%%%
%%%%%%%%%%%%%%%%%%%%%%%%%%%%%%%%%%%%%%%%%%%%%%%%%%%%%%%%%%%%%%  9  Linear constitutive laws
%%%%%%%%%%%%%%%%%%%%%%%%%%%%%%%%%%%%%%%%%%%%%%%%%%%%%%%%%%%%%%
%
%
%9
%
\section{Linear constitutive laws}
\label{sec:9}

%
%9.1                                                           9.1 Symmetric Linear Constitutive Laws
%
\subsection{Symmetric Linear Constitutive Laws}

As in {\bf Example \ref{Example 5.8}}
%~\ref{subsubsec:5.2.5}, 
let $X$ and $Y$ be two instances of the same Hilbert space. A linear 
symmetric law $y = Sx$ models a Standard Material if and only 
if $S$ is symmetric (i.e. coincides with its adjoint $S^*$). The potential is then 
$\phi(x)=\frac{1}{2}\langle x,Sx\rangle$. 
When $S$ is also positive definite, this potential is strictly convex and the symmetric linear 
constitutive law $y = Sx$ models a well--known class of GSM: the linear elastic materials. 

%
%%%Proposition 9.1                                                                           Proposition 9.1
%
\begin{propos}
\label{Proposition 9.1}
Fitzpatrick's sequence of the linear symmetric positive definite law $y = Sx$ is
\begin{equation}
F_{S,n}(x,y)=\langle x,y\rangle + \frac{n-1}{2n} \| S^{-\frac{1}{2}}y - S^{\frac{1}{2}}x \| ^2 
\nonumber
\end{equation}
$$ = \frac{1}{n}\langle x,y\rangle + (1 - \frac{1}{n} ) ( \frac{1}{2} \langle x,Sx\rangle + \frac{1}{2} \langle y,S^{-1}y \rangle )$$
$$= \langle x,y\rangle + (1 - \frac{1}{n} ) \frac{1}{2} \langle y-Sx,S^{-1}(y-Sx) \rangle  \quad \forall n\geq 2$$
where $S^{\frac{1}{2}}$ and $S^{-\frac{1}{2}}$ are the square roots of $S$ and its inverse 
$S^{-1}$. The pointwise limit of Fitzpatrick's sequence is
\begin{equation}
F_{S,\infty}(x,y)=\frac{1}{2}\langle x,Sx\rangle + \frac{1}{2} \left\langle y,S^{-1}y \right\rangle   \mbox{. }
\nonumber
\end{equation}
\end{propos}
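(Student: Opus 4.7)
My plan is to locate the unique critical point of the concave quadratic form in the variables $x_1,\ldots,x_{n-1}$ appearing in Definition~\ref{Definition 6.9} and evaluate the sum there. Since $T\colon x\mapsto\{Sx\}$ is single-valued, that definition becomes
\begin{equation*}
F_{S,n}(x,y)=\langle x,y\rangle+\sup_{x_1,\ldots,x_{n-1}\in X}\Bigl[\sum_{\lambda=1}^{n-1}\langle x_{\lambda+1}-x_\lambda,\,Sx_\lambda\rangle+\langle x_1-x,y\rangle\Bigr]
\end{equation*}
with $x_n=x$. Differentiating with respect to $x_i$ for $2\le i\le n-1$ and using $S=S^{*}$ produces the interior equation $S(x_{i-1}-2x_i+x_{i+1})=0$, whereas the $y$-contribution at $i=1$ gives $S(2x_1-x_2)=y$; introducing the fictitious point $x_0:=S^{-1}y$ (well defined since $S$ is positive definite, hence invertible) rewrites both conditions as the single three-term recursion $x_{i-1}-2x_i+x_{i+1}=0$ for $i=1,\ldots,n-1$, to be solved with Dirichlet data $x_0=S^{-1}y$ and $x_n=x$.

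Solutions of this recursion are affine in $i$, so $x_i=S^{-1}y+\tfrac{i}{n}(x-S^{-1}y)$ and the increments $d:=x_{\lambda+1}-x_\lambda=\tfrac{1}{n}(x-S^{-1}y)$ are constant. Inserting $Sx_\lambda=y+\lambda Sd$, the piece $(n-1)\langle d,y\rangle$ inside the sum cancels exactly against $\langle x_1-x,y\rangle=-(n-1)\langle d,y\rangle$, leaving $\tfrac{n(n-1)}{2}\langle d,Sd\rangle$. Since $\langle d,Sd\rangle=\tfrac{1}{n^2}\langle x-S^{-1}y,\,Sx-y\rangle=\tfrac{1}{n^2}\|S^{1/2}x-S^{-1/2}y\|^2$ (symmetry of $S$), the supremum equals $\tfrac{n-1}{2n}\|S^{1/2}x-S^{-1/2}y\|^2$, which is the first claimed formula; expanding the square and regrouping the $-2\langle x,y\rangle$ cross-term with the leading $\langle x,y\rangle$ gives the two other equivalent expressions. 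For the pointwise limit, $\tfrac{n-1}{2n}\to\tfrac12$ as $n\to\infty$, and the cross-term absorbs $\langle x,y\rangle$, producing $F_{S,\infty}(x,y)=\tfrac12\langle x,Sx\rangle+\tfrac12\langle y,S^{-1}y\rangle$.

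The point requiring care, and what I would treat as the main obstacle, is to certify that the stationary point found above is the \emph{global} maximum, so that the supremum is attained and finite. The Hessian of the objective with respect to the block-vector $(x_1,\ldots,x_{n-1})\in X^{n-1}$ is block-tridiagonal with $-2S$ on the diagonal and $S$ on the sub- and super-diagonals, and has the tensorial structure of $-B_{n-1}\otimes S$, where $B_{n-1}$ is the scalar $(n-1)\times(n-1)$ tridiagonal matrix with entries $2$ on the diagonal and $-1$ off the diagonal. The spectrum of $B_{n-1}$ is $\{2-2\cos(k\pi/n)\}_{k=1}^{n-1}$, strictly positive (already used at $t=0$ in the proof of Proposition~\ref{Proposition 6.7}), and $S$ is positive definite, so diagonalizing $B_{n-1}$ in the index variable decomposes the quadratic form into a sum $-\sum_k\mu_k\langle w_k,Sw_k\rangle$ and the Hessian is negative definite on $X^{n-1}$. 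Hence the critical point is the unique global maximizer, the supremum is attained, and Proposition~\ref{Proposition 9.1} follows.
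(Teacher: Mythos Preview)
Your proof is correct. It differs from the paper's in that the paper does not compute directly: it performs the change of variables $(x_\lambda,y_\lambda)\mapsto (S^{1/2}x_\lambda,S^{-1/2}y_\lambda)$ to reduce to $S=I$ and then cites \cite{bar07} (alternatively, it invokes the general recursion $H_{k+1}=S-\tfrac14 A^*H_k^{-1}A$ of Proposition~\ref{Proposition 9.4} and specializes in Example~\ref{Example 9.6}). Your argument is instead self-contained: you write the stationarity conditions, observe that in the symmetric case they collapse to the discrete Laplacian $x_{i-1}-2x_i+x_{i+1}=0$ with Dirichlet data $x_0=S^{-1}y$, $x_n=x$, read off the affine solution, and evaluate. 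The paper's route has the advantage of fitting into the general framework for non-symmetric $A$; yours has the advantage of being elementary and not relying on an external reference, and it makes transparent why the coefficient $\tfrac{n-1}{2n}$ appears (it is $\tfrac{1}{n^2}\sum_{\lambda=1}^{n-1}\lambda$). Your concavity certificate via $-B_{n-1}\otimes S$ is exactly the $t=0$ instance of the tridiagonal spectrum already computed in Proposition~\ref{Proposition 6.7}, so nothing new is needed there.
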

\begin{proof}
If in {\bf Definition \ref{Definition 6.9}} of $F_{S,n}(x,y)$ we replace each pair 
$\left(x_{\lambda},y_{\lambda}\right)$
by the pair $\left(S^{\frac{1}{2}}x_{\lambda},S^{-\frac{1}{2}}y_{\lambda}\right)$, then
we observe that it is enough to give the proof for $S$ being the linear identity mapping $I$. 
But a proof for $S=I$ was given in \cite{bar07}. See also {\bf Propositions \ref{Proposition 9.3}}, {\bf \ref{Proposition 9.4}} and {\bf Example \ref{Example 9.5}} in this Section \ref{sec:9}.
%
%\qed
\end{proof}

%
%%%9.2                                             9.2 Linear Elasticity
%
\subsection{Linear Elasticity}

The phenomenon described above is that of elasticity: $x$ is the strain tensor, $y$ is the stress tensor, 
$S$ is the stiffness tensor, its inverse $S^{-1}$ is the compliance tensor. The behavior of the elastic 
material is governed by the potential $\phi(x) = \frac{1}{2}\langle x,Sx\rangle$ or by the conjugate potential 
$\phi^*(y) = \frac{1}{2} \left\langle y,S^{-1}y \right\rangle $.

Most numerical solutions of the linear elasticity problems are obtained (\cite{cia11,fra65,val81,val77,was82}) 
by applying variational methods: 
\begin{enumerate}
\item[--] the minimum principle of the potential energy, involving an integral functional of the kinematically 
admissible displacements fields with the integrand $\phi(x)$ regarded as strain energy density,
\item[--] the minimum dual principle of the complementary energy, involving an integral functional of the statically 
admissible stress fields with the integrand $\phi^*(y)$ regarded as stress energy density,
\item[--] a primal--dual two--field variational principle, involving an integral functional with the integrand $\phi(x)+\phi^*(y)$, 
\item[--] three--field variational principles, etc. 
\end{enumerate}

The bipotential approach, very close to the primal--dual two--field methods, suggests intermediate variational 
principles involving integral bifunctionals (\cite{bul12,mat11,sax102,zou07}) with an integrand 
\begin{equation}
F_{S,n}(x,y)=\langle x,y\rangle + (1 - \frac{1}{n} ) [ \phi(x)+\phi^*(y)-\langle x,y\rangle ] \mbox{.}
\nonumber
\end{equation}
%
%

%
%9.3                                                       9.3  Strictly Monotone Non--Symmetric Linear Constitutive Laws
%
\subsection{Strictly Monotone Non--Symmetric Linear Constitutive Laws}

A linear law $y = Ax$ can be monotone without being symmetric, it is only necessary 
that the symmetric part $S=\frac{1}{2}\left(A+A^*\right)$ of $A$ is positive. It will be 
strictly monotone if $S$ is positive definite. 

%
%%%Proposition 9.2
%
\begin{propos}
\label{Proposition 9.2}
Fitzpatrick's function of a strictly monotone linear constitutive law $y = Ax$ is
\begin{equation}
F_{A,2}(x,y) = \langle x,y\rangle + \frac{1}{4}\langle y- Ax,S^{-1}(y- Ax)\rangle \mbox{.}
\nonumber
\end{equation}
\end{propos}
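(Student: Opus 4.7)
The plan is to compute $F_{A,2}(x,y)$ directly from \textbf{Definition \ref{Definition 5.9}} applied to the single-valued multifunction $T x_1 = \{A x_1\}$, which gives
\begin{equation}
F_{A,2}(x,y) = \langle x, y\rangle - \inf_{x_1 \in X}\langle x - x_1,\, y - A x_1\rangle. \nonumber
\end{equation}
The whole argument then reduces to evaluating an unconstrained infimum of a quadratic function of $x_1$.

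First I would translate the origin by introducing $z = x_1 - x$. A short expansion gives
\begin{equation}
\langle x - x_1,\, y - A x_1\rangle = -\langle z,\, y - Ax\rangle + \langle z,\, A z\rangle. \nonumber
\end{equation}
The key algebraic observation is that $\langle z, A z\rangle = \langle z, S z\rangle$, since the skew-symmetric part $W = \tfrac{1}{2}(A - A^*)$ satisfies $\langle z, W z\rangle = 0$. Writing $u := y - A x$, the problem reduces to
\begin{equation}
F_{A,2}(x,y) = \langle x, y\rangle + \sup_{z \in X}\Bigl[\,\langle z, u\rangle - \langle z, S z\rangle\,\Bigr]. \nonumber
\end{equation}

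Next I would solve this concave quadratic maximization. Because $S$ is positive definite, the functional $z \mapsto \langle z, u\rangle - \langle z, S z\rangle$ is strictly concave and coercive in the direction $-\langle z, Sz\rangle$, so the supremum is attained at the unique stationary point characterized by $2 S z = u$, that is, $z^{*} = \tfrac{1}{2} S^{-1} u$. Substituting back and using the symmetry of $S^{-1}$:
\begin{equation}
\langle z^{*}, u\rangle - \langle z^{*}, S z^{*}\rangle = \tfrac{1}{2}\langle u, S^{-1} u\rangle - \tfrac{1}{4}\langle u, S^{-1} u\rangle = \tfrac{1}{4}\langle u, S^{-1} u\rangle. \nonumber
\end{equation}
Re-inserting $u = y - A x$ yields the stated formula.

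The main obstacle is really just justifying that the supremum is genuinely attained (as opposed to $+\infty$); this is handled by the positive-definiteness of $S$, which was already assumed and which ensures $S^{-1}$ exists and that the quadratic is bounded above. The remaining steps are bookkeeping of the $\tfrac{1}{2}$ and $\tfrac{1}{4}$ factors. As a consistency check, one recovers the minimal value $-\tfrac{1}{4}\langle z, S^{-1} z\rangle$ of the infimum appearing in the computation at the end of the proof of \textbf{Example \ref{Example 5.8}}, confirming the sign and normalization.
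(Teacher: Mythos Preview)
Your proof is correct and follows essentially the same approach as the paper: both reduce the computation to finding the stationary point of the quadratic $\langle x-x_1,\,y-Ax_1\rangle$ in $x_1$, use that only the symmetric part $S$ survives in the quadratic term, and solve $2S(x_1-x)=y-Ax$ to obtain the $\tfrac{1}{4}\langle y-Ax,\,S^{-1}(y-Ax)\rangle$ correction. Your version spells out the substitution $z=x_1-x$ and the back-substitution a bit more explicitly, but the argument is the same.
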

\begin{proof}
If $y_1 = Ax_1$, the infimum of 
\begin{equation}
\langle x-x_1,y-y_1\rangle = \langle x-x_1,y- Ax\rangle + \langle x-x_1,S(x-x_1)\rangle 
\nonumber
\end{equation}
is attained for $x_1$ solution of $2S(x-x_1) = Ax-y$. Therefore, the first element of  
Fitzpatrick's sequence is
\begin{equation}
F_{A,2}(x,y) = \langle x,y\rangle + \frac{1}{4}\langle y- Ax,S^{-1}(y- Ax)\rangle \mbox{.}
\nonumber
\end{equation}
This function is a bipotential representing the non--associated linear law $y = Ax$.
%\qed
\end{proof}

%
%%%Proposition 9.3
% 
\begin{propos}
\label{Proposition 9.3}
Fitzpatrick's function of a (not strictly) monotone linear constitutive law $y = Ax$ is
\begin{equation}
F_{A,2}(x,y) = \langle x,y\rangle + \frac{1}{2}\langle \xi,(y- Ax)\rangle + i_{R(S)}(y- Ax)
\nonumber
\end{equation}
with $\xi$ any solution of the linear equation $2S \xi = y- Ax$. 
\end{propos}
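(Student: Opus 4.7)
The plan is to follow the same computation as in \textbf{Proposition \ref{Proposition 9.2}} but without invertibility of $S$, so that we must carefully track when the infimum defining $F_{A,2}$ is attained and when it is $-\infty$.

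First I would return to \textbf{Definition \ref{Definition 5.9}}, which gives
$$F_{A,2}(x,y)=\langle x,y\rangle - \inf_{x_1\in X}\langle x-x_1,y-Ax_1\rangle,$$
and change variables by setting $z=x_1-x$. Writing $A=S+W$ with $S$ the symmetric part and $W$ the skew-symmetric part of $A$, the skew part drops out of the quadratic term $\langle z,Az\rangle=\langle z,Sz\rangle$, so
$$\langle x-x_1,y-Ax_1\rangle = \langle z,Sz\rangle - \langle z,y-Ax\rangle.$$
Hence
$$F_{A,2}(x,y)=\langle x,y\rangle + \sup_{z\in X}\bigl[\langle z,y-Ax\rangle - \langle z,Sz\rangle\bigr].$$

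Next I would analyse this concave quadratic program in $z$, treating the two regimes separately. Since $S$ is symmetric and positive semi-definite, we have the orthogonal decomposition $X=R(S)\oplus\ker S$ with $\ker S=R(S)^{\perp}$. If $y-Ax\notin R(S)$, then its component in $\ker S$ is nonzero, so choosing $z$ to be a large multiple of that component makes the linear term arbitrarily large while the quadratic term vanishes; the supremum is $+\infty$, which is exactly what the indicator $i_{R(S)}(y-Ax)$ records. If $y-Ax\in R(S)$, then by the first-order optimality condition $2Sz=y-Ax$ admits a solution $z=\xi$; substituting gives
$$\langle\xi,y-Ax\rangle - \langle\xi,S\xi\rangle = 2\langle\xi,S\xi\rangle - \langle\xi,S\xi\rangle = \tfrac{1}{2}\langle\xi,y-Ax\rangle,$$
which yields the stated formula.

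Finally I would note well-definedness: if $\xi$ and $\xi'$ both satisfy $2S\xi=2S\xi'=y-Ax$, then $\xi-\xi'\in\ker S=R(S)^{\perp}$, and since $y-Ax\in R(S)$ we get $\langle\xi,y-Ax\rangle=\langle\xi',y-Ax\rangle$, so the choice of solution is immaterial. The main obstacle is simply the bookkeeping around the degenerate directions $\ker S$: one must argue cleanly that the supremum blows up precisely off $R(S)$, which is what motivates the indicator term absent from \textbf{Proposition \ref{Proposition 9.2}}, and that on $R(S)$ the critical-point value is independent of the particular $\xi$ chosen.
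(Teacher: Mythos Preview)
Your proof is correct and follows essentially the same route as the paper's: both reduce to the stationarity equation $2S\xi = y-Ax$, observe that solvability is equivalent to $y-Ax\in R(S)$, and read off the critical value $\tfrac{1}{2}\langle\xi,y-Ax\rangle$. Your version is in fact more complete than the paper's, since you explicitly justify why the supremum diverges off $R(S)$ and why the value is independent of the choice of $\xi$, whereas the paper merely asserts these points.
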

\begin{proof}
If the symmetric part $S$ of $A$ is positive, but not positive definite, the linear equation 
\begin{equation}
2S(x_1-x) = y - Ax 
\nonumber
\end{equation}
can have solutions $x_1-x=\xi$ if and only if $y - Ax$ belongs to the range $R(S)$ of $S$. Therefore, 
the first element of Fitzpatrick's sequence is
\begin{equation}
F_{A,2}(x,y) = \langle x,y\rangle + \frac{1}{2}\langle \xi,(y- Ax)\rangle + i_{R(S)}(y- Ax)
\nonumber
\end{equation}
and this value does not depend on the choice of the solution $\xi$ of the linear equation $2S \xi = y- Ax$. 
%\qed
\end{proof} 

%
%%%9.4                                                            9.4 Strictly n--Monotone Linear Constitutive Laws
%
%
\subsection{Strictly n--Monotone Linear Constitutive Laws} 

%
%%%Proposition 9.4
%
\begin{propos}
\label{Proposition 9.4}
Let $n\geq 2$, and $A$ be a strictly $n$--monotone linear mapping, then 
\begin{equation}
F_{A,n}(x,y) = \langle x,y\rangle + \frac{1}{4}\langle y- Ax,H^{-1}_n(y- Ax)\rangle 
\nonumber
\end{equation}
where the linear symmetric mappings $H_k$ indexed from $k=2$ to $n$ are generated by 
the recursive formula $H_{k+1} = S - \frac{1}{4} A^* H^{-1}_k A$, with an initial value $H_2 = S$.
\end{propos}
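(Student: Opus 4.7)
The plan is to proceed by induction on $n \geq 2$. The base case $n = 2$ is precisely \textbf{Proposition \ref{Proposition 9.2}}, together with the initialization $H_2 = S$.

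For the inductive step, assume the formula holds for $F_{A,n}$. I apply the recursion of \textbf{Proposition \ref{Proposition 7.1}} to the single--valued operator $T\xi = \{A\xi\}$:
\[
F_{A,n+1}(x,y) = \sup_{\xi \in X}\bigl[F_{A,n}(\xi,y) + \langle x - \xi, A\xi\rangle\bigr].
\]
Substituting the inductive hypothesis and translating by $\zeta = \xi - x$ (with $w := y - Ax$), the skew part of $A$ drops out of $\langle \xi, A\xi\rangle$, and the $x$--dependent cross terms cancel through $S = (A+A^*)/2$. What remains is
\[
F_{A,n+1}(x,y) - \langle x,y\rangle \;=\; \tfrac{1}{4}\langle w, H_n^{-1}w\rangle + \sup_{\zeta}\bigl[\langle \zeta, b\rangle - \langle \zeta, H_{n+1}\zeta\rangle\bigr],
\]
where $b = (I - \tfrac{1}{2}A^*H_n^{-1})w$ and the quadratic coefficient is exactly $H_{n+1} = S - \tfrac{1}{4}A^*H_n^{-1}A$. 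The recursive defining formula for $H_{n+1}$ thus emerges \emph{automatically} from the expansion rather than being imposed.

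Strict $(n+1)$--monotonicity of $A$, which implies strict $n$--monotonicity so that the induction hypothesis applies, guarantees that $H_{n+1}$ is symmetric positive definite; the concave quadratic in $\zeta$ then attains its maximum at $\zeta^* = \tfrac{1}{2}H_{n+1}^{-1}b$ with value $\tfrac{1}{4}\langle b, H_{n+1}^{-1}b\rangle$. Setting $Q = \tfrac{1}{2}A^*H_n^{-1}$, so that $b = (I - Q)w$ and $Q^* = \tfrac{1}{2}H_n^{-1}A$, we obtain
\[
F_{A,n+1}(x,y) \;=\; \langle x,y\rangle + \tfrac{1}{4}\bigl\langle w,\,\bigl[H_n^{-1} + (I - Q^*)H_{n+1}^{-1}(I - Q)\bigr]\,w\bigr\rangle.
\]

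The principal obstacle is the final algebraic collapse, namely the operator identity
\[
H_n^{-1} + (I - Q^*)H_{n+1}^{-1}(I - Q) \;=\; H_{n+1}^{-1}.
\]
My strategy is to establish first the companion identity $(I - Q)H_n(I - Q^*) = H_n - H_{n+1}$, which follows by direct expansion from the reformulation $H_{n+1} = S - Q H_n Q^*$ together with the elementary observation $S = H_n Q^* + Q H_n$ (a restatement of $A = 2H_n Q^*$ and $A^* = 2Q H_n$). This exhibits $H_{n+1}$ as a symmetric congruence perturbation of $H_n$; a Sherman--Morrison--Woodbury manipulation applied to $H_{n+1}^{-1} - H_n^{-1}$ then identifies it with $(I - Q^*)H_{n+1}^{-1}(I - Q)$. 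Once this identity is in hand, the expression for $F_{A,n+1}(x,y)$ collapses to $\langle x,y\rangle + \tfrac{1}{4}\langle w, H_{n+1}^{-1} w\rangle$, completing the induction.
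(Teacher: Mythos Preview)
Your inductive route via Proposition~\ref{Proposition 7.1} is a genuine alternative to the paper's direct argument, and your computation is correct up to the point where you reach
\[
F_{A,n+1}(x,y)=\langle x,y\rangle+\tfrac{1}{4}\bigl\langle w,\bigl[H_n^{-1}+(I-Q^*)H_{n+1}^{-1}(I-Q)\bigr]w\bigr\rangle .
\]
The difficulty is the final collapse. Your companion identity $(I-Q)H_n(I-Q^*)=H_n-H_{n+1}$ is correct, but the claim that Sherman--Morrison--Woodbury then yields $H_{n+1}^{-1}-H_n^{-1}=(I-Q^*)H_{n+1}^{-1}(I-Q)$ is not right. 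Applying Woodbury to $H_{n+1}=H_n-(I-Q)H_n(I-Q^*)$ produces instead
\[
H_{n+1}^{-1}-H_n^{-1}=\bigl(I-\tfrac{1}{2}H_n^{-1}A^*\bigr)\,\tilde H_{n+1}^{-1}\,\bigl(I-\tfrac{1}{2}AH_n^{-1}\bigr),
\qquad \tilde H_{n+1}=S-\tfrac{1}{4}AH_n^{-1}A^*,
\]
which is a different expression (note the reversal $A\leftrightarrow A^*$). More seriously, the identity you need is \emph{not} a purely local algebraic fact relating $H_n$, $H_{n+1}$, $A$ and $S$: it fails for an arbitrary symmetric positive definite $H_n$. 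For instance, with $A=\begin{pmatrix}3&1\\-1&2\end{pmatrix}$ and $H_n=I$ (so $S\neq I$), one computes $H_{n+1}^{-1}=\tfrac{1}{5}\begin{pmatrix}12&4\\4&8\end{pmatrix}$ while $H_n^{-1}+(I-Q^*)H_{n+1}^{-1}(I-Q)=\tfrac{1}{5}\begin{pmatrix}12&-4\\-4&8\end{pmatrix}$. The identity holds only for the specific $H_n$ produced by the recursion from $H_2=S$; any proof must therefore use the full recursion history, not just one step.

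The paper sidesteps this entirely: it maximizes the concave quadratic in all $n-1$ variables $z_1,\dots,z_{n-1}$ simultaneously, obtains a tridiagonal linear system, and solves it by backward substitution. The recursion $H_{k+1}=S-\tfrac{1}{4}A^*H_k^{-1}A$ emerges from eliminating $z_{n-1},z_{n-2},\dots$ in turn, and the formula $H_n z_1=\tfrac{1}{2}(y-Ax)$ drops out at the end with no operator identity to verify. The paper then records your identity in the Remark following the proposition, but as a \emph{consequence} of the two computations agreeing, not as an input. To salvage your inductive argument you would need an independent proof of the identity that somehow encodes $H_n$'s provenance in the recursion; absent that, the direct approach is cleaner.
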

\begin{proof}
The function $\Phi(x,y) = F_{A,n}(x,y) - \langle x,y\rangle$ is the supremum of
\begin{equation}
\sum_{i=1}^{n-2} \langle x_{i+1}-x_i,Ax_i \rangle + \langle x_1-x,y\rangle + \langle x-x_{n-1},Ax_{n-1}\rangle  \mbox{. }
\nonumber
\end{equation}
Let us regard $x$ as the origin in $X$ and set $x_i = x + z_i$, this translation leads to supremize
%the supremum of
%
%                   
\begin{equation}
\sum_{i=1}^{n-2} \langle z_{i+1}-z_i,Ax + Az_i \rangle + \langle z_1,y\rangle - \langle z_{n-1},Ax + Az_{n-1}\rangle \mbox{. }  
\nonumber
\end{equation}
The sum $\displaystyle{ \sum_{i=1}^{n-2} } \langle z_{i+1}-z_i,Ax \rangle$ reduces to $\langle z_{n-1}-z_1,Ax \rangle$, therefore
\begin{equation}
\Phi(x,y) = \sup_{z_i} [ \sum_{i=1}^{n-2} \langle z_{i+1}-z_i,Az_i \rangle + \langle z_1,y-Ax\rangle - \langle z_{n-1},Az_{n-1}\rangle ] 
\nonumber
\end{equation}
$$= \sup_{z_i} [ \sum_{i=1}^{n-2} \langle z_{i+1},Az_i \rangle - \sum_{i=1}^{n-1} \langle z_i,Sz_i\rangle + \langle z_1,y-Ax\rangle ] \mbox{. }$$ 
Due to the strict monotonicity of $A$, the bracketed term is a strictly concave and differentiable 
function in $(z_1,z_2,\ldots,z_{n-1})$. Then, by differentiation, the maximum is attained 
for $(z_1,z_2,\ldots,z_{n-1})$ solution of the linear system
 \begin{equation*}\label{Multi2}
 \begin{split}
 -2Sz_1+A^*z_2&=Ax-y\\
  Az_{i-1}-2Sz_i + A^*z_{i+1} &=0 \quad \quad \quad \quad \quad \quad \quad \quad \text{for}\; i= 2 \;\text{to}\; n-2  \\
 Az_{n-2}-2Sz_{n-1}&=0.
   \end{split} 
\end{equation*}
When these stationarity conditions are satisfied, the quadratic part of the above concave function 
is the additive inverse of the half of the linear part, and \cite{val12}
\begin{equation}
F_{A,n}(x,y) = \langle x,y\rangle + \frac{1}{2}\langle z_1,y- Ax\rangle \mbox{. }
\nonumber
\end{equation}
It remains to express $z_1$ in terms of $y- Ax$. Let us rewrite the above linear system as
 \begin{equation*}\label{Multi2b}
 \begin{split}
 Sz_1-\frac{1}{2}A^*z_2&=\frac{1}{2}\left(y-Ax\right)\\
 %Sz_2&=\frac{1}{2}\left(Az_1 + A^Tz_3\right)\\
 Sz_i-\frac{1}{2}A^*z_{i+1}&=\frac{1}{2}Az_{i-1} \quad \quad \quad \quad \quad \quad \quad \quad \text{for}\; i= 2 \;\text{to}\; n-2\\
 Sz_{n-1}&=\frac{1}{2}Az_{n-2}. 
   \end{split}
\end{equation*} 
Clearly, this linear system can be solved by backward substitution. Let us intitialize by 
$H_2=S$ a sequence $H_k$ (indexed from $k=2$ to $n$) of invertible linear symmetric mappings, and set
\begin{equation}
z_i = \frac{1}{2}H^{-1}_{n+1-i}Az_{i-1}
\nonumber
\end{equation}
then successively: 
\begin{equation}
H_{n-i+1} = S -\frac{1}{4}A^*H_{n-i}^{-1}A  \quad \quad \quad \quad \quad \quad \quad \quad  \text{for}\; i= n-2 \;\text{to}\; 1
\nonumber
\end{equation}
We end with
\begin{equation}
H_n z_1 = \frac{1}{2}(y-Ax)
\nonumber
\end{equation}
and the conclusion follows. 
%\qed
\end{proof} 

%
%%%Example 9.5
%
\begin{example}
\label {Example 9.5}
If $A$ is the identity $I$, the sequence $H_k$ is generated by the recursive formula 
$H_{k+1} = I - \frac{1}{4}H_k^{-1}$. Because of the initial value $H_2=I$, each $H_k$ is spheric, 
and we can set $H_k=\alpha_k I$. The sequence of scalars $\alpha_k$ is generated by the recursive 
formula $\alpha_{k+1} = 1-\frac{1}{4\alpha_k}$. The homographic function $1-\frac{1}{4\alpha}$ admits 
the unique fixed point $\alpha=\frac{1}{2}$. As a property of the complex projective line 
$\mathbb {P}^1(\mathbb {C})$, 
the sequence $\beta_k = \frac{1}{\alpha_k - \frac{1}{2}}$ is an arithmetic progression. 
The common difference is equal to $2$: $\beta_{k+1} = \beta_k+2$. The initial term being $\alpha_2=1$, 
i.e. $\beta_2=2$, the solution is $\beta_k = 2(k-1)$ i.e. $\alpha_k = \frac{1}{2} \frac{k}{k-1}$. Thus 
$H_n^{-1} = 2 \frac{n-1}{n} I$ and we retrieve the result given in \cite{bar07} 
\begin{equation}
F_{I,n}(x,y) = \langle x,y\rangle + ( 1-\frac{1}{n} ) \frac{1}{2} \langle y-x,y-x\rangle  \mbox{. }
%F_{I,n}(x,y) = \langle x,y\rangle + \left( 1-\frac{1}{n} \right) \frac{1}{2} \langle y-x,y-x\rangle  \mbox{. }
\nonumber
\end{equation}
\end{example}

%
%%%Example 9.6
%
\begin{example}
\label{Example 9.6}
If $A=S$, the sequence $H_k$ is generated by the recursive formula $H_{k+1} = S - \frac{1}{4}SH_k^{-1}S$. 
Because of the initialization $H_2=S$, each $H_k$ is proportional to $S$, and we can set $H_k=\alpha_k S$. 
The sequence of scalars $\alpha_k$ is as in {\bf Example \ref {Example 9.5}}.
Thus $H_n^{-1} = 2 \frac{n-1}{n} S^{-1}$ and we retrieve the result of {\bf Proposition \ref{Proposition 9.1}}   
\begin{equation}
F_{S,n}(x,y) = \langle x,y\rangle + ( 1-\frac{1}{n} ) \frac{1}{2} \langle y-Sx,S^{-1}(y-Sx)\rangle \mbox{. } 
%F_{S,n}(x,y) = \langle x,y\rangle + \left( 1-\frac{1}{n} \right) \frac{1}{2} \langle y-Sx,S^{-1}(y-Sx)\rangle \mbox{. }
\nonumber
\end{equation}
\end{example}

%
%%% Remark 9.7                                                                              Remark 9.7
%
\begin{remark}
%[Remark 9.7]
The symmetric linear mappings $H_k$ are positive definite. 
\end{remark}

%
%%% Remark 9.8                                                                              Remark 9.8
%
\begin{remark}
The recursive construction $H_{k+1} = S - \frac{1}{4}A^* H_k^{-1} A$ stops if $H_k$ 
ceases to be positive definite, i.e. if $A$ fails to be $(k+1)$--monotone. 
\end{remark}

%
%%% Remark 9.9                                                                               Remark 9.9
%
\begin{remark}
%[Remark 9.9]
The function $F_{A,3}$ captures the $3$--monotonicity ({\bf Example \ref{Example 6.4}}) of the linear mapping $A$.   
\end{remark}

%
%%% Remark 9.10                                                                               Remark 9.10
%
\begin{remark}
The special recursion formula $H_{n+1} = S - \frac{1}{4}A^* H_n^{-1} A$ must be in accordance with 
the general recursion formula ({\bf Proposition \ref{Proposition 7.1}})
%~\ref{subsubsec: 6.3.5}??? 6.3.6 
%
%
\begin{equation}
F_{A,n+1}(x,y)= \sup_{\xi} \left[ F_{A,n}(\xi,y) + \langle x-\xi,A\xi \rangle \right]  \mbox{. }
\nonumber
\end{equation}
To evaluate this supremum, let us regard $x$ as the origin in $X$ and set $\xi=x+\zeta$. 
This translation leads to supremize the function
%This translation leads to the supremum of the function
%
\begin{equation}
\langle x+\zeta,y \rangle -  \langle \zeta,Ax+A\zeta \rangle +\frac{1}{4} \langle y-Ax-A\zeta,H_n^{-1}(y-Ax-A\zeta) \rangle 
\nonumber
\end{equation}
with respect to $\zeta$. The quadratic part of this function reads
\begin{equation}
- \langle \zeta,S\zeta \rangle +\frac{1}{4} \langle A\zeta,H_n^{-1}A\zeta \rangle = - \langle \zeta,H_{n+1}\zeta \rangle  \mbox{. }
\nonumber
\end{equation}
The linear part of this function reads $\langle \zeta,z \rangle$ with 
\begin{equation}
z = y-Ax - \frac{1}{2}A^* H_n^{-1}(y-Ax) = ( I- \frac{1}{2}A^* H_n^{-1} )(y-Ax)
\nonumber
\end{equation}
%
%
%where $I$ stands for the identity operator. 
The constant part of this function reads
\begin{equation}
\langle x,y \rangle +\frac{1}{4} \langle y-Ax,H_n^{-1}(y-Ax) \rangle \mbox{. }
\nonumber
\end{equation}
If $A$ is a strictly $(n+1)$--monotone linear mapping, the function to maximize is strictly concave. 
The maximum is attained for $\zeta$ solution of the linear system
\begin{equation}
2H_{n+1} \zeta = z  \mbox{. }
\nonumber
\end{equation}
When this stationarity condition is satisfied, the quadratic part is the additive inverse of the half of the linear part, and 
\begin{equation}
F_{A,n+1}(x,y)= \langle x,y \rangle +\frac{1}{4} \langle y-Ax,H_n^{-1}(y-Ax) \rangle +\frac{1}{4} \langle z,H_{n+1}^{-1}z \rangle 
\nonumber
\end{equation}
has to coincide with 
\begin{equation}
F_{A,n+1}(x,y)= \langle x,y \rangle +\frac{1}{4} \langle y-Ax,H_{n+1}^{-1}(y-Ax) \rangle \mbox{. }
\nonumber
\end{equation}
Therefore, the sequence $H_k^{-1}$ follows the formula
\begin{equation}
H_n^{-1} + ( I- \frac{1}{2} H_n^{-1} A ) H_{n+1}^{-1} ( I- \frac{1}{2}A^* H_n^{-1} ) = H_{n+1}^{-1}
\nonumber
\end{equation}
which allows us to compute recursively $K_{n+1} = H_{n+1}^{-1}$  from $K_{n} = H_{n}^{-1}$ 
(hence from $K_2=S^{-1}$) by solving the equation
\begin{equation} 
K = K_n + ( I- \frac{1}{2} K_n A ) K ( I- \frac{1}{2}A^* K_n )
\nonumber
\end{equation}
using a fixed point algorithm initialized by $K_n$.
\end{remark}
%
%
%
%%%%%%%%%%%%%%%%%%%%%%%%%%%%%%%%%%%%%%%%%%%%%%%%%%%%%%%%%%%%%%
%%%%%%%%%%%%%%%%%%%%%%%%%%%%%%%%%%%%%%%%%%%%%%%%%%%%%%%%%%%%%%  10 Revising elasticity theory -- Study of linear monotone coaxial constitutive laws
%%%%%%%%%%%%%%%%%%%%%%%%%%%%%%%%%%%%%%%%%%%%%%%%%%%%%%%%%%%%%%
%
%

\section{Study of linear monotone coaxial constitutive laws}
\label{sec:10}

%
%10.1 = old 8.1
%
\subsection{Coaxial constitutive laws}

%
%%%Definition 10.1
%
\begin{definition}
Let $X$ and $Y$ be the $6$--dimensional Euclidean space of real symmetric $3\times 3$ matrices 
(with $e$ as identity matrix and $\langle x,y\rangle =\mbox{tr}(xy)$ as duality product). 
The variables $x$ and $y$ can be regarded as strain and stress tensors. A constitutive law 
relating $x$ and $y$ is coaxial if $x$ and $y$ have the same eigenvectors. 
\end{definition}

%
%%%Proposition 10.2
%
\begin{propos}
\label{Proposition 10.2} 
Under the additional assumption of linearity, the coaxial constitutive laws have the general form
\begin{equation}
y=\left[\mbox{tr}(kx)\right]e+2\mu x \nonumber
\end{equation}
where $\mu$ is a scalar and $k$ is a symmetric $3\times 3$ matrix.
\end{propos}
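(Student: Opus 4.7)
The plan is to recast the coaxiality condition algebraically: for symmetric matrices, having the same eigenvectors is equivalent to commuting, so the linear constitutive law $y = Ax$ is coaxial precisely when $[x, Ax] = 0$ for every $x \in \mathrm{Sym}(3)$. Since this identity is quadratic in $x$, polarizing (replace $x$ by $x + \epsilon x'$ and extract the linear term in $\epsilon$) produces the bilinear working identity $[x, Ax'] + [x', Ax] = 0$ for all $x, x' \in \mathrm{Sym}(3)$. Setting $x' = e$ shows $A(e)$ commutes with every symmetric matrix, and since the symmetric matrices generate $M_3(\mathbb{R})$ as an algebra, $A(e)$ is central; hence $A(e) = \lambda e$ for some $\lambda \in \mathbb{R}$.

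Next I would split each $x$ as $x = se + d$ with $s = \mathrm{tr}(x)/3$ and $d \in \mathrm{Sym}_0(3)$, and write $A(d) = \phi(d) e + B(d)$ where $\phi: \mathrm{Sym}_0(3) \to \mathbb{R}$ and $B: \mathrm{Sym}_0(3) \to \mathrm{Sym}_0(3)$ are linear. The condition $[d, A(d)] = 0$ collapses to $[d, B(d)] = 0$. To show $B$ is scalar multiplication on $\mathrm{Sym}_0(3)$, I would test on the uniaxial deviators $d_v := 3 vv^T - e$ for unit vectors $v$. Since $d_v$ has eigenvalues $2, -1, -1$, commutation with $d_v$ forces $B(d_v) = \alpha\, vv^T + F'$ with $F'$ symmetric on $v^\perp$. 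Applying the polarized identity with $d'$ ranging over traceless symmetric matrices supported on $v^\perp$ (so $d' v = 0$, making $v$ a simple $0$-eigenvector of $d'$ with the other two eigenvalues generically distinct), one sees that $B(d')$ is diagonal in an eigenbasis of $d'$ containing $v$, and hence commutes with $d_v$; so $[d_v, B(d')] = 0$ and therefore $[d', B(d_v)] = 0$. Letting $d'$ vary shows $F'$ commutes with every traceless symmetric operator on $v^\perp$, which forces $F'$ scalar on $v^\perp$; tracelessness of $B(d_v)$ then yields $B(d_v) = c(v)\, d_v$ for a scalar $c(v)$.

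To prove $c(v)$ is independent of $v$ I would use the identity $d_{v_1} + d_{v_2} + d_{v_3} = 0$ valid for any orthonormal triple, a direct consequence of $\sum v_i v_i^T = e$. Linearity of $B$ gives $c(v_1) d_{v_1} + c(v_2) d_{v_2} + c(v_3) d_{v_3} = 0$, and a short check (comparing ranks) shows that any two of the $d_{v_i}$ are linearly independent, so $c(v_1) = c(v_2) = c(v_3)$. For arbitrary non-parallel unit vectors $u, w$, the common perpendicular $t = (u \times w)/\|u \times w\|$ completes each to an orthonormal triple sharing $t$, yielding $c(u) = c(t) = c(w)$. Hence $c(v) \equiv 2\mu$ is constant; because the rank-one projections $vv^T$ span $\mathrm{Sym}(3)$, the deviators $d_v$ span $\mathrm{Sym}_0(3)$, so $B$ acts on $\mathrm{Sym}_0(3)$ as multiplication by $2\mu$.

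Assembling the pieces gives $A(se + d) = s \lambda e + \phi(d) e + 2\mu d$. To exhibit $k$: choose $k_0 \in \mathrm{Sym}_0(3)$ with $\phi(d) = \mathrm{tr}(k_0 d)$ (possible because the Frobenius pairing is non-degenerate on $\mathrm{Sym}_0(3)$), and set $k = k_0 + \frac{\lambda - 2\mu}{3} e$; one then checks that $\mathrm{tr}(kx) e + 2\mu x \equiv A(x)$. The main obstacle is the rigidity step showing $B$ is scalar on $\mathrm{Sym}_0(3)$: purely commutation-type constraints typically allow a large centralizer, and it is the combination of the polarized identity with the careful choice of the $d_v$ family, together with test matrices supported on $v^\perp$, that squeezes out the scalar action.
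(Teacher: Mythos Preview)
Your argument is correct. It shares the paper's overall architecture---pin down $A(e)$, show the map acts as a scalar multiple on uniaxial inputs, prove that scalar is constant via orthonormal triples and a common-perpendicular trick, then represent the residual linear form by a symmetric matrix $k$---but the central technical step is handled differently. The paper reads ``same eigenvectors'' in the strong sense (every eigenvector of $x$ is an eigenvector of $y$), so that when $x=uu^*$ the two-dimensional eigenspace $u^\perp$ immediately forces $y$ to be scalar there, giving $A(uu^*)=\alpha(u)\,uu^*+\beta(u)\,e$ in one line; constancy of $\alpha$ then follows from $uu^*+vv^*+ww^*=e$. You instead take coaxiality as bare commutation $[x,Ax]=0$, which is strictly weaker (and arguably the standard mechanical meaning of ``coaxial''); this obliges you to polarize and to probe $B(d_v)$ with auxiliary traceless test matrices supported on $v^\perp$ before you can conclude $B(d_v)=c(v)\,d_v$. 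The paper's route is shorter and more transparent; yours is more robust, since it actually establishes the proposition under a weaker hypothesis and is closer in spirit to a classification of equivariant linear maps via bilinear identities.
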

\begin{proof}
Let us begin by making two remarks
\begin{enumerate}
\item[(i)] if $x = e$, then $y$ is spheric, 
\bigskip

\item[(ii)] for any unit vector $u$, if $x = uu^*$, then $y = \alpha(u)uu^* +\beta(u)e$ where 
$\alpha$ and $\beta$ are two scalar functions. 
\end{enumerate}
Consider 3 unit vectors $u$, $v$, $w$  constituting an orthonormal basis. Taking 
\begin{equation}
x = uu^* + vv^* +ww^* = e \mbox{, }
\nonumber
\end{equation}
we can conclude from the linearity assumption that
\begin{equation}
\alpha(u)uu^* + \alpha(v)vv^* + \alpha(w)ww^* + [\beta(u) + \beta(v) + \beta(w)]e
\nonumber
\end{equation}
is spheric. This implies that the scalar function $\alpha$ has a constant value that 
we will denoted by $2\mu$. Applying now the linearity assumption to the generic case 
\begin{equation}
x = \lambda_1 uu^* + \lambda_2 vv^* + \lambda_3 ww^* \mbox{, }
\nonumber
\end{equation}
we conclude that $y = 2\mu x + \gamma(x)e$ where $\gamma$ is a scalar linear function. 
But any linear scalar function of $x$ can be expressed as $\mbox{tr}(kx)$ where $k$ is a symmetric 
$3\times 3$ matrix, and the conclusion follows. 
%\qed
\end{proof}

%
%%%Remark 10.3
%
\begin{remark}
A linear coaxial constitutive law involves $7$ coefficients: the scalar $\mu$ and 
$6$ independent coefficients of the symmetric matrix $k$. 
\end{remark}

%
%%%Remark 10.4
%
\begin{remark} 
\label{Remark 10.4}
A linear coaxial constitutive law is not symmetric except when the matrix $k$ is spheric 
($k = \lambda e$), in which case $\lambda$ and $\mu$ are Lam\'{e}'s coefficients of Hooke's 
elastic constitutive law
\begin{equation}
y=\lambda(\mbox{tr}\,x)e+2\mu x  \mbox{. }
\nonumber
\end{equation}

It is a well--known result that Hooke's constitutive law is positive if and only if
\begin{equation}
3\lambda +2\mu\geq 0 \mbox{ and } \mu\geq 0  \mbox{. }
\nonumber
\end{equation}
\end{remark}

%
%10.2 
%
\subsection{Symmetric part of a linear coaxial constitutive law} 

If the deviatoric part $h$ of the matrix $k$ is not $0$, set
\begin{equation}
k=\lambda e+h \mbox{ with } \lambda=\frac{1}{3}(\mbox{tr}\, k) \mbox{ and } \mbox{tr}\, h=0 \mbox{. }
\nonumber
\end{equation}

The symmetric part of the linear coaxial constitutive law is then
\begin{eqnarray}
Sx &=& \frac{1}{2}\left[\mbox{tr}(kx)\right]e+\frac{1}{2}(\mbox{tr}\,x)k+2\mu x \nonumber \\
   &=& \lambda(\mbox{tr}\,x)e +
        2\mu x+\frac{1}{2}(\mbox{tr}\,x)h+\frac{1}{2}\left[\mbox{tr}(hx)\right]e \nonumber \mbox{. }
\end{eqnarray}

%
%10.3
%
\subsection{Monotonicity of linear coaxial constitutive laws}

%
%%%Proposition 10.5
%
\begin{propos}
\label{Proposition 10.5}
A linear coaxial constitutive laws is monotone if and only if \textnormal{(\cite{val09,val12})}
\begin{equation}
3\lambda +2\mu\geq 0 \mbox{, } \mu\geq 0 \mbox{ and } \mbox{tr}\left(h^2\right)\leq \displaystyle\frac{8}{3}\mu(3\lambda +2\mu) \mbox{. }
\nonumber
\end{equation}
\end{propos}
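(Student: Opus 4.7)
The plan is to reduce monotonicity of the linear map to positive semidefiniteness of its symmetric part $S$, then analyze the resulting quadratic form $\langle x,Sx\rangle$ by splitting $x$ into a spherical and a deviatoric part.

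First I would observe, as in \textbf{Example \ref{Example 5.2}}, that monotonicity of the single valued linear map $y = [\mathrm{tr}(kx)]e + 2\mu x$ on the Hilbert space of symmetric $3\times 3$ matrices (equipped with $\langle x,y\rangle = \mathrm{tr}(xy)$) is equivalent to $\langle x,Sx\rangle \geq 0$ for every symmetric $x$, where $S$ is the symmetric part computed just above the statement. Plugging the formula for $Sx$ into the duality product and using $\mathrm{tr}\,h = 0$ yields, after a short computation,
$$\langle x,Sx\rangle = \lambda(\mathrm{tr}\,x)^2 + 2\mu\,\mathrm{tr}(x^2) + (\mathrm{tr}\,x)\,\mathrm{tr}(hx).$$

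Second, I would decompose $x = \tfrac{1}{3}(\mathrm{tr}\,x)e + d$ with $d$ deviatoric ($\mathrm{tr}\,d = 0$). Since $\mathrm{tr}\,h = 0$, one has $\mathrm{tr}(hx) = \mathrm{tr}(hd)$, and $\mathrm{tr}(x^2) = \tfrac{1}{3}(\mathrm{tr}\,x)^2 + \mathrm{tr}(d^2)$. Setting $s = \mathrm{tr}\,x$, the quadratic form becomes
$$\langle x,Sx\rangle = \frac{3\lambda+2\mu}{3}\,s^2 + (\mathrm{tr}(hd))\,s + 2\mu\,\mathrm{tr}(d^2),$$
a decoupled quadratic form in the independent variables $s \in \mathbb{R}$ and $d$ (deviatoric symmetric).

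Third, I would extract the necessary and sufficient conditions for non-negativity. Taking $d = 0$ forces $3\lambda + 2\mu \geq 0$; taking $s = 0$ forces $\mu\,\mathrm{tr}(d^2) \geq 0$, i.e. $\mu \geq 0$. When these hold strictly, regard the expression as a quadratic in $s$ for fixed $d$: its non-negativity is equivalent to a non-positive discriminant, namely
$$[\mathrm{tr}(hd)]^2 \leq \frac{8\mu(3\lambda+2\mu)}{3}\,\mathrm{tr}(d^2)\quad\text{for every deviatoric } d.$$
The sharp form of this inequality comes from Cauchy--Schwarz: since $h$ itself is deviatoric symmetric, $[\mathrm{tr}(hd)]^2 \leq \mathrm{tr}(h^2)\,\mathrm{tr}(d^2)$ with equality attained at $d = h$. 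Thus the discriminant condition holds for all $d$ if and only if $\mathrm{tr}(h^2) \leq \tfrac{8}{3}\mu(3\lambda+2\mu)$, giving the announced third inequality.

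The degenerate boundary cases ($3\lambda + 2\mu = 0$ or $\mu = 0$) need a brief separate check: either forces $\mathrm{tr}(h^2) \leq 0$, hence $h = 0$, which is consistent with the stated inequalities and recovers Hooke's law as in \textbf{Remark \ref{Remark 10.4}}. The only non-routine step is recognizing that the worst direction for $d$ is $d = h$, turning the Cauchy--Schwarz bound into a tight equivalence rather than a mere necessary condition; the rest is bookkeeping on the quadratic form.
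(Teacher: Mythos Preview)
Your argument is correct. The paper, however, proceeds differently: it fixes an orthonormal basis of the $6$--dimensional space of symmetric $3\times 3$ matrices adapted to $h$, namely four deviatoric tensors $d_1,\dots,d_4$ orthogonal to $h$, then $d_5=h/\|h\|$ and $d_6=e/\sqrt{3}$, and writes $S$ as the block--diagonal matrix
\[
\begin{bmatrix}
2\mu I_4 & 0 & 0\\
0 & 2\mu & \tfrac{\sqrt{3}}{2}\|h\|\\
0 & \tfrac{\sqrt{3}}{2}\|h\| & 3\lambda+2\mu
\end{bmatrix},
\]
so that positivity reduces to $\mu\geq 0$ together with positivity of the lower $2\times 2$ block. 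Your spherical--deviatoric splitting is essentially the same decomposition seen without coordinates: your variable $s$ corresponds to the $d_6$--component, and your Cauchy--Schwarz step (worst $d$ parallel to $h$) is exactly what isolates the $d_5$--direction and discards $d_1,\dots,d_4$. The payoff of the paper's explicit basis is that the $2\times 2$ block and the full $6\times 6$ block structure are reused verbatim in the subsequent $n$--monotonicity analysis (Propositions~\ref{Proposition 10.7} and following); the payoff of your version is that it is basis--free and slightly quicker for this proposition alone.
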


%
%%%Remark 10.6
%
\begin{remark}
 In addition to the usual conditions on Lam\'{e}'s coefficients, the mono-
tonicity condition of the coaxial constitutive law demands that the deviatoric part $h$ 
of the matrix $k$ has not to be too large.
\end{remark} 
\begin{proof}
If the deviatoric tensor $h$ is not vanishing, let us express the matrix of the symmetric part $S$ 
of the linear coaxial constitutive law in an orthonormal basis constituted of four deviatoric tensors 
$d_1$, $d_2$, $d_3$, $d_4$ orthogonal to $h$, $d_5=\frac{h}{\| h\|}$ and $d_6 = \frac{e}{\sqrt{3}}$. 
For $i=1$ to $4$, $Sd_i = 2 \mu d_i$, otherwise $Sd_5 = 2 \mu d_5 + \frac{1}{2} \| h\| e$, 
and $Se = (3\lambda +2\mu)e +\frac{3}{2}h$. The matrix of $S$ is therefore the symmetric block matrix
$$\begin{bmatrix}
2 \mu I_4 & 0 & 0\\
0& 2 \mu & \frac{\sqrt{3}}{2} \| h\|\\
0 & \frac{\sqrt{3}}{2} \| h\| & 3\lambda +2\mu\\
\end{bmatrix}$$
where $I_4$ is the $4\times 4$ identity matrix. The $6\times 6$ symmetrical matrix representing $S$ 
is positive if and only if the scalar $\mu$ and the $2\times 2$ symmetrical matrix 
$$s = \begin{bmatrix}
2 \mu & \frac{\sqrt{3}}{2} \| h\|\\
\frac{\sqrt{3}}{2} \| h\| & 3\lambda +2\mu\\
\end{bmatrix}$$
are positive. The additional conditions for $s$ to be positive are 
\begin{equation}
3\lambda +2\mu \geq 0 \mbox{ and } \frac{3}{4} \| h\|^2 \leq 2 \mu (3\lambda +2\mu)  \mbox{. }
\nonumber
\end{equation}
%
%
%\qed
\end{proof}

%
%10.4 
%
\subsection{A characteristic angle of monotone linear coaxial laws}
\label{subsec: 10.4}
Let us rewrite the inequality limiting the magnitude of $h$ as follows:
\begin{equation}
\| h\| \leq \frac{2}{\sqrt{3}}  \sqrt {2 \mu (3\lambda +2\mu)}  \mbox{. }
\nonumber
\end{equation}
We are led to introduce an angle $\theta$  between $0$ and $\frac{\pi}{2}$ such that
\begin{equation}
\| h\| = \frac{2}{\sqrt{3}}  \sqrt {2 \mu (3\lambda +2\mu)}\sin \theta  \mbox{. }
\nonumber
\end{equation}
The smaller $\theta$ is, the closer is the linear coaxial law to Hooke's constitutive law. 
If $2 \theta < \pi$, then the law is strictly monotone. 

%
%10.5 
%
\subsection{Condition of $n$--monotonicity for a linear coaxial law}
\label{subsec: 10.5}

%
%%%Proposition 10.7
% 
\begin{propos}
\label{Proposition 10.7}
A linear coaxial law is $n$--monotone if and only if
$$3\lambda +2\mu \geq 0 \mbox{, } \mu \geq 0  \mbox{ and } n \theta \leq \pi \mbox{.}$$
\end{propos}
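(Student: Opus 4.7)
The plan is to reduce the problem to the $2\times 2$ setting of \textbf{Proposition \ref{Proposition 6.7}} by decomposing the $6$--dimensional space of symmetric $3\times 3$ matrices into two orthogonal $A$--invariant subspaces on which $A$ takes a much simpler form. Working in the orthonormal basis $d_1,\ldots,d_6$ already used for \textbf{Proposition \ref{Proposition 10.5}}, a direct computation (using $\mathrm{tr}\,d_i=0$ and $\mathrm{tr}(hd_i)=0$ for $i\leq 4$) shows that $Ad_i=2\mu d_i$ for $i=1,\ldots,4$, while $Ad_5$ and $Ad_6$ both lie in $\mathrm{span}(d_5,d_6)$. Hence $V=\mathrm{span}(d_1,\ldots,d_4)$ and $W=\mathrm{span}(d_5,d_6)$ are both $A$--invariant and mutually orthogonal.

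Next I would observe that $n$--monotonicity decouples over this orthogonal sum: writing $x_i=x_i^V+x_i^W$ with $Ax_i^V\in V$ and $Ax_i^W\in W$, the cross pairings vanish by orthogonality and $\sum_{i=1}^n\langle x_{i+1}-x_i,Ax_i\rangle$ splits into the corresponding $V$--sum and $W$--sum. Therefore $A$ is $n$--monotone if and only if both restrictions $A|_V$ and $A|_W$ are. Since $A|_V=2\mu I_4$ is symmetric, \textbf{Proposition \ref{Proposition 6.5}} guarantees its $n$--monotonicity for every $n\geq 2$ whenever $\mu\geq 0$; the necessity of $\mu\geq 0$ already follows from ordinary monotonicity (\textbf{Proposition \ref{Proposition 10.5}}).

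The core of the argument is the analysis of $A|_W$. Its matrix in the basis $(d_5,d_6)$ has symmetric part $s$ (already computed in the proof of \textbf{Proposition \ref{Proposition 10.5}}) and skew--symmetric part $\tfrac{\sqrt{3}\,\|h\|}{2}\,J$, precisely the form assumed in \textbf{Proposition \ref{Proposition 6.7}} with $r=\tfrac{\sqrt{3}\,\|h\|}{2}$. A short calculation gives $\det s = 2\mu(3\lambda+2\mu)-\tfrac{3}{4}\|h\|^2$, and substituting the defining relation $\|h\|=\tfrac{2}{\sqrt{3}}\sqrt{2\mu(3\lambda+2\mu)}\,\sin\theta$ from \textbf{Subsection \ref{subsec: 10.4}} reduces this to $\det s = 2\mu(3\lambda+2\mu)\cos^2\theta$ while $|r|=\sqrt{2\mu(3\lambda+2\mu)}\,\sin\theta$. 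Therefore $|r|=\sqrt{\det s}\,\tan\theta$, which means the characteristic angle supplied by \textbf{Proposition \ref{Proposition 6.7}} coincides with the $\theta$ of \textbf{Subsection \ref{subsec: 10.4}}. Invoking \textbf{Proposition \ref{Proposition 6.7}} on $W$, combined with the conclusions on $V$ and the necessary monotonicity conditions from \textbf{Proposition \ref{Proposition 10.5}}, yields the claimed equivalence.

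The main obstacle will be handling the degenerate boundary cases in which $s$ fails to be strictly positive definite, since \textbf{Proposition \ref{Proposition 6.7}} is stated under that strict hypothesis. Fortunately, when $h\neq 0$ the monotonicity inequality $\mathrm{tr}(h^2)\leq\tfrac{8}{3}\mu(3\lambda+2\mu)$ forces both $\mu>0$ and $3\lambda+2\mu>0$; the extremal case $\theta=\pi/2$ is compatible with $n\theta\leq\pi$ only when $n=2$, where $n$--monotonicity is already settled by \textbf{Proposition \ref{Proposition 10.5}}; and when $h=0$ the law reduces to Hooke's elasticity (\textbf{Remark \ref{Remark 10.4}}), so $\theta=0$ and $n\theta\leq\pi$ is trivial. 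Thus the boundary cases dissolve by a brief case distinction rather than needing a strengthening of \textbf{Proposition \ref{Proposition 6.7}}.
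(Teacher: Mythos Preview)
Your proposal is correct and follows essentially the same route as the paper: block--diagonalize $A$ in the basis $d_1,\ldots,d_6$, handle the $4\times 4$ scalar block $2\mu I_4$ by the symmetric theory, and apply \textbf{Proposition \ref{Proposition 6.7}} to the $2\times 2$ block $a=s+rJ$ after checking $|r|=\sqrt{\det s}\,\tan\theta$. Your write--up is in fact more complete than the paper's, which omits the explicit decoupling argument, the verification of the $\tan\theta$ identity, and the degenerate boundary cases you treat at the end.
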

\begin{proof}
A monotone linear coaxial law is represented by a $6\times 6$ block matrix, the diagonal of which is 
compound of a positive definite spheric $4\times 4$ matrix $2\mu I_4$ and a $2\times 2$ matrix 
$$a = \begin{bmatrix}
2 \mu & 0\\
\sqrt{3} \| h\| & 3\lambda +2\mu\\
\end{bmatrix} =
s + \frac{\sqrt{3}}{2}\| h\| \begin{bmatrix}
0 & -1\\
1 & 0\\
\end{bmatrix} = s +rJ$$
with
$$r = \frac{\sqrt{3}}{2}\| h\| = \sqrt{\text{det}\,s} \tan\theta \mbox{. }$$
According to {\bf Proposition \ref{Proposition 6.7}}, the additional condition for the constitutive 
law to be $n$--monotone is $n \theta \leq \pi $. 
%\qed
\end{proof}

%
%10.6 
%
\subsection{Condition of cyclic--monotonicity for a linear coaxial law}

%
%%%Proposition 10.8
%
\begin{propos}
\label{Proposition 10.8}
A linear coaxial law which is cyclically monotone reduces to Hooke's constitutive law. 
\end{propos}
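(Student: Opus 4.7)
The plan is to deduce the result as a direct corollary of \textbf{Proposition \ref{Proposition 10.7}}, which characterizes $n$-monotonicity of a linear coaxial law by the three conditions $3\lambda+2\mu\geq 0$, $\mu\geq 0$, and $n\theta\leq \pi$, where the characteristic angle $\theta\in[0,\pi/2]$ is introduced in subsection \ref{subsec: 10.4}.

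First, I would recall that cyclic monotonicity means $n$-monotonicity for every $n\geq 2$. Applying \textbf{Proposition \ref{Proposition 10.7}} for all $n$, the inequality $n\theta \leq \pi$ must hold for arbitrarily large $n$, which forces $\theta = 0$.

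Next, I would translate $\theta=0$ back into a condition on the matrix $k$. Assuming for the moment that $\mu>0$ and $3\lambda+2\mu>0$, the defining relation
\begin{equation*}
\|h\| = \frac{2}{\sqrt{3}}\sqrt{2\mu(3\lambda+2\mu)}\sin\theta
\end{equation*}
gives $\|h\|=0$, hence $h=0$, i.e.\ $k = \lambda e$. Substituting into the general form of \textbf{Proposition \ref{Proposition 10.2}} yields exactly Hooke's law $y = \lambda(\mathrm{tr}\,x)e + 2\mu x$ of \textbf{Remark \ref{Remark 10.4}}. The only point requiring a moment's care is the degenerate case where $\theta$ is not defined, namely $\mu=0$ or $3\lambda+2\mu=0$. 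But in either case the monotonicity inequality of \textbf{Proposition \ref{Proposition 10.5}}, $\mathrm{tr}(h^2)\leq \frac{8}{3}\mu(3\lambda+2\mu)$, already forces $h=0$, so again $k=\lambda e$ and the law is of Hookean form (with degenerate Lamé coefficients).

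There is no real obstacle here: everything has been prepared by the preceding material and the argument is essentially the observation that the constraint $n\theta\leq\pi$ for all $n$ collapses the admissible coaxial laws to the isotropic symmetric case. The only stylistic care needed is to treat uniformly the generic and the boundary regimes of the monotonicity cone described in \textbf{Proposition \ref{Proposition 10.5}}.
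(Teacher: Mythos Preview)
Your argument is correct and follows the same route as the paper: invoke \textbf{Proposition \ref{Proposition 10.7}} to obtain $n\theta\leq\pi$ for all $n\geq 2$, conclude $\theta=0$ (the paper phrases this as the Archimedean property of $\mathbb{R}$), hence $h=0$ and the law is Hookean. Your explicit handling of the degenerate boundary cases $\mu=0$ or $3\lambda+2\mu=0$ via \textbf{Proposition \ref{Proposition 10.5}} is a small refinement the paper omits, but the approach is otherwise identical.
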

\begin{proof}
If a linear coaxial law is cyclically monotone, then $n \theta \leq \pi$ for every integer $n$ 
larger than $2$. The field of the real numbers being Archimedean, this is possible only for $\theta =0$, 
i.e. $h=0$. Therefore the linear coaxial law reduces to Hooke's constitutive law. 
%\qed
\end{proof}

%
%10.7 
%
\subsection{Fitzpatrick's sequence of a strictly $n$--monotone linear coaxial law}

%
%%%Proposition 10.9
%
\begin{propos} 
Let $A$ be a strictly $n$--monotone linear coaxial law, then Fitzpatrick's sequence of $A$ is finite 
and constituted of the $n-1$ functions $F_{A,k}$ defined for $k = 2$ to $n$ by
\begin{equation}
F_{A,k}(x,y) = \mbox{tr}(xy) + \frac{1}{4} \mbox{tr} \left[ (y-Ax) H^{-1}_k (y-Ax) \right]
\nonumber
\end{equation}
with
$$H_k = \begin{bmatrix}
2 \mu \alpha_k I_4 & 0\\
0 & h_k\\
\end{bmatrix}
= \begin{bmatrix}
\frac{k}{k-1} \mu I_4 & 0\\
0 & \frac{1}{2} \frac{\sin(k\theta)}{\sin((k-1)\theta)} \frac{1}{\cos \theta} s\\
\end{bmatrix}
$$
\end{propos}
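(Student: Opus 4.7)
The plan is to specialize Proposition \ref{Proposition 9.4} to the linear coaxial mapping $A\colon x \mapsto [\mbox{tr}(kx)]e + 2\mu x$ and to solve the recursion $H_{k+1} = S - \frac{1}{4}A^* H_k^{-1} A$ explicitly, starting from $H_2 = S$. Using the orthonormal basis $(d_1,\ldots,d_4, d_5, d_6)$ of the proof of Proposition \ref{Proposition 10.5}, the mapping $A$ becomes block-diagonal with a $4\times 4$ block $2\mu I_4$ (where the skew part vanishes) and a $2\times 2$ block $a = s + rJ$, with $s$ and $r = \frac{\sqrt{3}}{2}\|h\| = \sqrt{\det s}\,\tan\theta$ as in Subsection \ref{subsec: 10.4}. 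Since $S$ is block-diagonal in this basis and the recursion is polynomial in $A, A^*, H_k^{-1}$, the block structure is preserved at every step, so $H_k = \mbox{diag}(H_k^{(1)}, h_k)$ for all $k$.

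On the four-dimensional block, $A$ acts as the scalar $2\mu$, so the recursion collapses to the scalar equation of Example \ref{Example 9.5} (rescaled by $2\mu$), giving $H_k^{(1)} = 2\mu\alpha_k I_4$ with $\alpha_k = \frac{k}{2(k-1)}$, i.e. $\frac{k}{k-1}\mu I_4$, as claimed.

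On the two-dimensional block, I propose the ansatz $h_k = c_k s$ and verify that it is preserved by the recursion. Since $A^* = s - rJ$ on this block, I expand $(s-rJ)s^{-1}(s+rJ) = s - r^2 J s^{-1} J$; the decisive identity is $J s^{-1} J = -s/\det s$, which holds for any invertible $2\times 2$ symmetric matrix $s$ and follows from $J^2 = -I$ together with the cofactor formula for $s^{-1}$. Combined with $r^2/\det s = \tan^2\theta$, this yields $h_{k+1} = \bigl(1 - \frac{1+\tan^2\theta}{4c_k}\bigr)s = \bigl(1 - \frac{1}{4c_k \cos^2\theta}\bigr)s$, so the ansatz is self-consistent and reduces the matrix recursion to the scalar recursion $c_{k+1} = 1 - \frac{1}{4c_k \cos^2\theta}$ with initial value $c_2 = 1$.

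The announced closed form $c_k = \frac{1}{2}\frac{\sin(k\theta)}{\sin((k-1)\theta)\cos\theta}$ then falls out by induction: starting from $c_2 = \frac{\sin 2\theta}{2\sin\theta\cos\theta} = 1$, the step uses the standard identity $\sin((k+1)\theta) + \sin((k-1)\theta) = 2\sin(k\theta)\cos\theta$. Strict $n$-monotonicity of $A$ forces $n\theta < \pi$ (Proposition \ref{Proposition 10.7}), hence $\sin(k\theta) > 0$ and $h_k$ positive definite for $k = 2,\ldots,n$, while at step $n+1$ the factor $\sin((n+1)\theta)$ would become non-positive so $h_{n+1}$ ceases to be positive definite and Fitzpatrick's sequence terminates, confirming its finiteness. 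The main technical hurdle is recognising the scalar-ansatz collapse on the $2\times 2$ block, which rests entirely on the fortuitous identity $Js^{-1}J = -s/\det s$ peculiar to dimension two; everything else is algebraic bookkeeping and a one-line trigonometric induction.
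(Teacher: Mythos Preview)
Your proof is correct and follows essentially the same route as the paper: block-diagonalise $A$ in the basis $(d_1,\ldots,d_4,d_5,d_6)$, invoke Example~\ref{Example 9.5} on the $4\times4$ block, and reduce the $2\times2$ recursion $h_{k+1}=s-\tfrac14 a^* h_k^{-1}a$ to a scalar recursion by observing that $h_k$ stays proportional to $s$. Your normalisation $h_k=c_k s$ is the paper's $h_k=\tfrac12\gamma_k s$ with $c_k=\tfrac12\gamma_k$, and the two scalar recursions match after this change of variable. You actually supply two details the paper leaves as ``by induction'' and ``as one can easily check'': the identity $Js^{-1}J=-s/\det s$ explaining \emph{why} proportionality to $s$ is preserved, and the sine addition formula verifying the closed form; and your positivity/finiteness discussion goes a bit beyond what the paper's proof spells out.
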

\begin{proof}
The orthonormal basis $d_1$, $d_2$, $d_3$, $d_4$, $d_5=\frac{h}{\| h\|}$, $d_6 = \frac{e}{\sqrt{3}}$ already chosen 
for representing $S$ ({\bf Proposition \ref{Proposition 10.5}}),
%(~\ref{subsubsec: 8.1.6}) 
leads to represent the linear mapping $A$ by the $6\times 6$ matrix ({\bf Proposition \ref{Proposition 10.7}})
$$A = \begin{bmatrix}
2 \mu I_4 & 0\\
0 & a\\
\end{bmatrix}$$
where $a$ is the $2\times 2$ matrix
$$a = \begin{bmatrix}
2 \mu & 0\\
\sqrt{3} \| h\| & 3\lambda +2\mu\\ 
\end{bmatrix} \mbox{.}$$
To express the $k^{th}$ Fitzpatrick's function we have to solve the recurrence equation 
$H_{k+1} = S-\frac{1}{4} A^* H^{-1}_k A$. We can transform this problem into two smaller problems: 
\begin{enumerate}
\item[(i)] find the solution for the $4\times 4$ spherical matrix $2 \mu I_4$,
\bigskip

\item[(ii)] find the solution for the $2\times 2$ matrix $a$. 
\end{enumerate}
Up to the multiplicative factor $2 \mu$, the first problem was already solved in 
{\bf Example \ref{Example 9.5}}, the solution is $2 \mu \alpha_k I_4 = \mu \frac{k}{k-1} I_4$.
It remains to solve the recurrence equation 
$h_{k+1} = s-\frac{1}{4} a^* h^{-1}_k a$ initialised with $h_2=s$. 
%The matrix of the solution of the initial problem will be 
%
%$\begin{bmatrix}
%2 \mu \alpha_k I_4 & 0\\
%0 & h_k\\
%\end{bmatrix}$. 
By induction, we observe that $h_k$ is proportional to $s$. 
Let us set $h_k = \frac{1}{2} \gamma_k s$. 
Therefore, $\gamma_k$ is satisfying the recurrence equation 
$\gamma_{k+1} = 2-\frac{1}{\cos^2 \theta}\frac{1}{\gamma_k}$ with $\gamma_2=2$. 
As one can easily check by induction, the solution is 
$\gamma_k = \frac{\sin(k\theta)}{\sin((k-1)\theta)} \frac{1}{\cos \theta}$, 
which achieves the proof. 
%
%\qed
\end{proof}

%
%%%Remark 10.10
%
\begin{remark} 
If we introduce the variable $X = \cos \theta$, then $X \gamma_k = \frac{U_{k-1}(X)}{U_{k-2}(X)}$ 
is a quotient of consecutive Chebyshev polynomials of the second kind. The above recurrence equation 
satisfied by the $\gamma_k$ and the initial value $\gamma_2=2$ follow from the recurrence relation 
and initial conditions
$$U_{0}(X) = 1 \mbox{, } U_{1}(X) = 2X \mbox{, } U_{k}(X) = 2X U_{k-1}(X)- U_{k-2}(X)$$
defining these polynomials. The well--known solution of this recurrence problem is 
$U_{k}(X) = \frac{\sin((k+1)\theta)}{\sin\theta}$, and we recover the above trigonometric expression of $\gamma_k$. 
\end{remark}

%
%%%Remark 10.11
%
\begin{remark}
As $\theta$ approaches zero, $\gamma_k$ approaches $\frac{k}{k-1} = 2 \alpha_k$, and $h_k$ approaches 
$\alpha_k \begin{bmatrix}
2 \mu & 0\\
0 & 3\lambda +2\mu\\
\end{bmatrix}$
in accordance with {\bf Example \ref{Example 9.6}}.
%~\ref{subsubsec: 7.2.5}.
\end{remark}
%

%
%
%
%%%%%%%%%%%%%%%%%%%%%%%%%%%%%%%%%%%%%%%%%%%%%%%%%%%%%%%%%%%%%%
%%%%%%%%%%%%%%%%%%%%%%%%%%%%%%%%%%%%%%%%%%%%%%%%%%%%%%%%%%%%%%  11  Revising elasticity theory--Return to Hooke
%%%%%%%%%%%%%%%%%%%%%%%%%%%%%%%%%%%%%%%%%%%%%%%%%%%%%%%%%%%%%%
%
%
\section{Revising elasticity theory--Return to Hooke}

%
%11.1 
%
\subsection{What did Robert Hooke say?}

First, he noticed that the extension of a spring is proportional to the weight hanging on it. 
In 1660, from this experimental observation, he modeled the behavior of elastic materials by stating 
the law "UT TENSIO SIC VIS" (Latin: as the extension, so the force). He published it in the anagram form 
"ceiiinosssttuv", whose solution he gave in 1678. 

Nowadays, this law is interpreted as stating that the 
stress tensor $y$ is an isotropic linear function of the strain tensor $x$:
\begin{equation}
y=\lambda(\mbox{tr}\,x)e+2\mu x
\nonumber
\end{equation}
where $\lambda$ and $\mu$ are coefficients (Lam\'{e}'s coefficients).

\bigskip 
In the next subsection, we discuss this interpretation of Hooke's prescription. 
 
%
%11.2 
%
\subsection{Interpretation of Hooke's prescription}

Hooke's prescription was one--dimensional. At that time, to model the behavior of elastic bodies, 
Robert Hooke did not have at his disposal the mathematical concepts of vectors and tensors. How to interpret 
his prescription as a 6-dimensional one between strain and stress tensors? From Hooke's "UT / SIC ", 
we will retain the hypotheses of linearity and monotonicity. Why to add the condition of isotropy? 
By analogy with the behavior of springs, Robert Hooke claims that a force applied to an elastic body in some 
direction generates a proportional deformation in the same direction. In the framework of tensorial calculus, 
we will understand it as: if a direction is an eigenvector of the stress tensor, then it is an eigenvector of 
the strain tensor. Finally, we will interpret Hooke's prescription by requiring the conditions of
\begin{enumerate}
\item[--] linearity, 
\item[--] monotonicity, 
\item[--] and coaxiality,  
\end{enumerate}
for the relation between the strain tensor $x$ and the stress tensor $y$. 
According to {\bf Proposition \ref{Proposition 10.2}}, these three hypotheses 
lead to the constitutive law: 
\begin{equation}
y=\lambda(\mbox{tr}\,x)e+2\mu x + \mbox{tr}(hx)e
\nonumber
\end{equation}
where $\lambda$ and $\mu$ are scalar coefficients (Lam\'{e}'s coefficients) and $h$ is a deviatoric tensor. 
According to {\bf Proposition \ref{Proposition 10.5}} the monotonicity is insured by the inequalities: 
\begin{equation}
\mu\geq 0 \mbox{, } 3\lambda +2\mu\geq 0 \mbox{, } \mbox{tr}\left(h^2\right)\leq \displaystyle\frac{8}{3}\mu(3\lambda +2\mu) \mbox{. }
\nonumber
\end{equation}
%
%

%
%11.3 
%
\subsection{Four reasons for which the coaxial law reduces to the isotropic law}

%
%11.3.1 
%
\subsubsection{Classical argument: isotropy}

The isotropy assumption asks, for every rotation $R$, that the change of $x$ in $R^{-1}xR$ is 
followed by the change of $y$ in  $R^{-1}yR$. Therefore, the deviatoric tensor $h$ has to satisfy 
$R^{-1}hR=h$, and is forced to vanish. 

% 
%11.3.2 
%
\subsubsection{Hermann von Helmholtz's argument: existence of a strain energy}

In the linear case, a strain energy density exists if an only if the constitutive law is symmetrical. 
Therefore, the deviatoric tensor $h$ is forced to vanish. 

%
%11.3.3
%
\subsubsection{Lars Onsager's argument: symmetry}

As in {\bf Remark \ref{Remark 10.4}}, the symmetry of the constitutive law implies that the deviatoric tensor $h$ vanishes. 

%
%11.3.4 
%
\subsubsection{Jean--Jacques Moreau's argument: cyclic monotonicity}

The cyclic monotonicity leads to the existence of a strain energy, and therefore to the 
vanishing of the deviatoric tensor $h$ (see also {\bf Proposition \ref{Proposition 10.8}}).
\bigskip

Certainly, the reader is now aware that in our opinion, the most relevant argument is the 
fourth (cyclic monotonicity) and not the first one (isotropy). 

%
%11.4 
%
\subsection{Revisiting isotropic elastic materials}
In light of the previous results, we do believe that elasticity theory must be revisited. 
The constitutive laws of the so--called isotropic linear elastic materials have to be 
extended to monotone linear coaxial laws. The cyclic monotonicity is a too strong assumption, 
$k$--monotonicity up to a finite integer $n$ has to be considered. We do believe that this maximal 
integer $n$ is a relevant parameter characterizing the material, and we will say that the material 
is $n$--monotone. Classical elasticity corresponds to very large integers $n$ (therefore to very small 
angles $\theta$, cf \ref{subsec: 10.5}). 
The evaluation of the seven coefficients of the model must be performed by 
modern identification methods, allowing an optimal exploitation of the electronic recording of the 
measures (\cite{atch08,atch11}). 
%Disregarding 
Forgoing the cyclic monotonicity, the existence of an elastic 
potential $\phi(x)$ is lost. But numerical methods based on primal--dual two--field variational principles 
can be easily generalized. Simply, the integrand $\phi(x)+\phi^*(y)$ of the classical integral functional has to be 
replaced (\cite{bul12,mat11,sax102,zou07}) by the bipotential 
\begin{equation}
F_{A,n}(x,y)= \mathrm{tr}(x,y) +\frac{1}{4} \mathrm{tr} [ (y-Ax) H_{n}^{-1}(y-Ax) ]
\nonumber
\end{equation}
with
$$H_{n} = \begin{bmatrix} 
\frac{n}{n-1}\mu I_4 &0\\
0  & \frac{1}{2}\frac{\sin (n\theta)}{\sin((n-1)\theta)} \frac{1}{\cos\theta} s 
\end{bmatrix}$$
in accordance with the notation 
$$ s =\begin{bmatrix} 
2\mu & \frac{\sqrt{3}}{2}\|h\| \\ 
\frac{\sqrt{3}}{2}\|h\| & 3\lambda + 2\mu
\end{bmatrix}$$
and with the definition of the angle $\theta$ 
$$ \sqrt{2\mu\left(3\lambda + 2\mu\right)} \sin\theta = \frac{\sqrt{3}}{2}\|h\|.$$

%
%%%12 
%
\section{Conclusion}
\label{sec:12}

The class of $n$--monotone materials for which the constitutive law is described by a $n$--monotone 
operator is larger than the class of Generalized Standard Materials (GSM). The integer $n$ can be regarded
as a characteristic of these materials. Fitzpatrick's functions allow to construct algorithms based on 
primal--dual two--field variational principles, as easy as the GSM--specific algorithms (\cite{bul12,mat11,sax102,zou07}). 
To apply this modeling in fluid mechanics, it is necessary to replace the tensor $x$ by the strain rate tensor and the tensor 
$y$ by the Cauchy stress tensor, augmented by the spheric tensor $pe$, where $p$ is the pressure. To apply this modeling 
in thermal engineering, it is necessary to replace the tensor $x$ by the opposite temperature gradient and the tensor $y$ 
by the heat flux vector. To apply this modeling in electromagnetism, it is necessary (\cite{heh03,heh08}) to replace the tensor 
$x$ by
$$\begin{bmatrix}
0 & E^1 & E^2 & E^3\\
E^1 & 0 & -B^3 & B^2\\
E^2 & B^3  & 0 & -B^1\\
E^3 & -B^2 & B^1 & 0\\
\end{bmatrix}$$
and the tensor $y$ by 
$$\begin{bmatrix}
0 & D^1 & D^2 & D^3\\
D^1 & 0 & -H^3 & H^2\\
D^2 & H^3  & 0 & -H^1\\
D^3 & -H^2 & H^1 & 0\\
\end{bmatrix}$$
where $E$, $B$, $D$, $H$,  stand respectively for the electric field strength, the magnetic 
induction field, the electric displacement and the magnetic field intensity. 

However, in mechanical and civil engineering, the constitutive laws of many materials 
(ductile metals, metal matrix composites, wet clays, plastic soils, granular materials, etc) 
are not monotone. The class of Implicit Standard Material (ISM) is larger than the class of 
$n$--monotone materials. Every Fitzpatrick's function is globally lsc and convex, but the 
modeling of ISM only requires bipotentials which are partially lsc and convex. It would be 
interesting to generalize the concept of Fitzpatrick's sequences to the case of non--monotone 
operators. It is also expected that the recurrence formula of {\bf Proposition \ref{Proposition 7.1}} 
passes this extension with few changes. 

As a pioneering attempt for modeling Coulomb's dry friction, let us consider the following 
constitutive law in an Euclidean linear space: two vectors $x$ and $y$ have the same orientation. 
This constitutive law is not monotone, Fitzpatrick's method cannot be directly applied. Does this 
constitutive law model an IMS? Can Fitzpatrick's sequence be generalized for representing it by 
bipotentials? If we only ask for a local supremum in the {\bf Definition \ref{Definition 5.9}}  
of Fitzpatrick's function, we obtain (\cite{val09}) the extremal value
\begin{equation}
b_2(x,y)=\frac{1}{2}\langle x,y\rangle +\frac{1}{2}\| x\|\| y\|.
\nonumber
\end{equation}
This function is partially lsc and convex. Thanks to the Cauchy--Schwarz--Bunia-
kovsky inequality, 
it is a bipotential. If $\psi$ is an angle chosen between $0$ and $\pi$ such that 
$\langle x,y\rangle=\| x\|\| y\| \cos\psi$, we observe that
\begin{equation}
b_2(x,y)=\| x\|\| y\| \left( \cos \frac{\psi}{2} \right)^2. 
\nonumber
\end{equation}
A similar weakening in the definitions of Fitzpatrick's functions $F_{T,n}(x,y)$ leads (\cite{val09}) 
to the following increasing sequence of bipotentials 
\begin{equation}
b_n(x,y)=\| x\|\| y\| \left( \cos \frac{\psi}{n} \right)^n. 
\nonumber
\end{equation}
The pointwise limit
$$b(x,y)=\| x\|\| y\|$$
is a wellknown bipotential (\cite{val05}) representing the constitutive law asserting that two 
vectors $x$ and $y$ admit the same orientation; we will call it "Cauchy--Schwarz--Buniakovsky bipotential". 

We hope that this kind of extension of Fitzpatrick's sequences will prove to be very helpful to produce 
relevant bipotentials for representing the non--associated constitutive laws of the ISM evoked in 
subsection \ref{subsec: 4.3}. 

Finally, it is worth mentioning that the notion of bipotential can provide additional modeling to rate--independent phenomena 
with hysteretic behavior : linearized plasticity with hardening, finite-strain elastoplasticity with nonlinear and non--associated kinematic hardening rules,
damage in ductile materials, phase transformations 
in shape memory alloys, delamination, ferromagnetism, superconductivity (\cite{boss94,boss00,mas97,pri96}), 
quasistatic evolution of fractures, and crack propagation in brittle materials.
%
%
%
%%%%%%%%%%%%%%%%%%%%%%%%%%%%%%%%%%%%%%%%%%%%%%%%%%%%%%%%%%%%%%
%%%%%%%%%%%%%%%%%%%%%%%%%%%%%%%%%%%%%%%%%%%%%%%%%%%%%%%%%%%%%%  Acknowledgements
%%%%%%%%%%%%%%%%%%%%%%%%%%%%%%%%%%%%%%%%%%%%%%%%%%%%%%%%%%%%%%
%
%

\bigskip
%\begin{acknowledgements} 
{\bf Acknowledgements} Thanks are due to our long--term discussion partners {\em Ma-rius Buliga} (Institute of Mathematics of the Romanian 
Academy, Bucarest, Romania) and {\em G\'{e}ry de Saxc\'{e}} (University of Lille, France), for numerous stimulating 
discussions on the subject matter of this paper and for making a number of perceptive comments. 
The first--named author is grateful for partial financial support by the European CNRS Franco--Romanian Associated 
Laboratories agreement (LEA Math--Mode, Math\'{e}matiques et Mod\'{e}lisation) established between the CNRS and the Romanian 
Academy. Parts of this work were done when he has been invited at the Simion Stoilow Institute of the Romanian Academy. 
Moreover, he is indebted to {\em Jean--Marie Souriau} from the University of Aix--Marseille and {\em Jean--Jacques Moreau} 
from the University of Montpellier for being source of inspiration and encouragement. 
%\end{acknowledgements}
 
%
%
%
%%%%%%%%%%%%%%%%%%%%%%%%%%%%%%%%%%%%%%%%%%%%%%%%%%%%%%%%%%%%%%
%%%%%%%%%%%%%%%%%%%%%%%%%%%%%%%%%%%%%%%%%%%%%%%%%%%%%%%%%%%%%%  References
%%%%%%%%%%%%%%%%%%%%%%%%%%%%%%%%%%%%%%%%%%%%%%%%%%%%%%%%%%%%%%
%
% 
%

%

\end{document}